\numberwithin{equation}{section}
\newtheorem{Theorem}{Theorem}[section]
\newtheorem{Proposition}[Theorem]{Proposition}
 { \theoremstyle{definition}
\newtheorem{Example}[Theorem]{Example}}
\begin{document}

\allowdisplaybreaks

\newcommand{\arXivNumber}{1604.07847}

\renewcommand{\thefootnote}{}

\renewcommand{\PaperNumber}{025}

\FirstPageHeading

\ShortArticleName{Multi-Poisson Approach to the Painlev\'{e} Equations}

\ArticleName{Multi-Poisson Approach to the Painlev\'{e} Equations:\\
from the Isospectral Deformation\\ to the Isomonodromic Deformation\footnote{This paper is a~contribution to the Special Issue on Symmetries and Integrability of Dif\/ference Equations. The full collection is available at \href{http://www.emis.de/journals/SIGMA/SIDE12.html}{http://www.emis.de/journals/SIGMA/SIDE12.html}}}

\Author{Hayato CHIBA}

\AuthorNameForHeading{H.~Chiba}

\Address{Institute of Mathematics for Industry, Kyushu University, Fukuoka, 819-0395, Japan}
\Email{\href{mailto:chiba@imi.kyushu-u.ac.jp}{chiba@imi.kyushu-u.ac.jp}}

\ArticleDates{Received October 11, 2016, in f\/inal form April 11, 2017; Published online April 15, 2017}

\Abstract{A multi-Poisson structure on a Lie algebra $\mathfrak{g}$ provides a systematic way to construct completely integrable Hamiltonian systems on $\mathfrak{g}$ expressed in Lax form $\partial X_\lambda /\partial t = [X_\lambda , A_\lambda ]$ in the sense of the isospectral deformation, where $X_\lambda , A_\lambda \in \mathfrak{g}$ depend rationally on the indeterminate $\lambda $ called the spectral parameter. In this paper, a method for modifying the isospectral deformation equation to the Lax equation $\partial X_\lambda /\partial t = [X_\lambda , A_\lambda ] + \partial A_\lambda /\partial \lambda $ in the sense of the isomonodromic deformation, which exhibits the Painlev\'{e} property, is proposed. This method gives a few new Painlev\'{e} systems of dimension four.}

\Keywords{Painlev\'{e} equations; Lax equations; multi-Poisson structure}

\Classification{34M35; 34M45; 34M55}

\renewcommand{\thefootnote}{\arabic{footnote}}
\setcounter{footnote}{0}

\section{Introduction}\label{section1}

A dif\/ferential equation def\/ined on a complex region is said to have the Painlev\'{e} property if any movable singularity of any solution is a pole. Painlev\'{e} and his group classif\/ied second order ODEs having the Painlev\'{e} property and found new six dif\/ferential equations called the Painlev\'{e} equations. Nowadays, it is known that they are written in Hamiltonian forms
\begin{gather*}
(\text{P}_\text{J})\colon \frac{dq}{dt} = \frac{\partial H_J}{\partial p}, \qquad
\frac{dp}{dt} = -\frac{\partial H_J}{\partial q}, \qquad J = \text{I}, \dots, \text{VI}.
\end{gather*}
Among six Painlev\'{e} equations, the Hamiltonian functions of the f\/irst, second and fourth Painlev\'{e} equations are polynomials in both of the independent variable~$t$ and the dependent variab\-les~$(q,p)$. They are given by
\begin{gather}
H_{\text{I}} = \frac{1}{2}p^2 - 2q^3 - tq, \label{2dimP1} \\
H_\text{II} = \frac{1}{2}p^2 - \frac{1}{2}q^4 - \frac{1}{2}tq^2 - \alpha q, \label{2dimP2} \\
H_\text{IV} = -pq^2 + p^2q - 2pqt - \alpha p + \beta q, \label{2dimP4}
\end{gather}
respectively, where $\alpha, \beta \in \mathbb{C}$ are arbitrary parameters. Another important property of the Painlev\'{e} equations is that they are expressed as Lax equations. Let $L_\lambda $ and $A_\lambda $ be square matrices which depend rationally on the indeterminate $\lambda $ called the spectral parameter. The Painlev\'{e} equations are written in Lax form as
\begin{gather}
\frac{\partial L_\lambda }{\partial t} = [L_\lambda , A_\lambda ] + \frac{\partial A_\lambda }{\partial \lambda },\label{Lax1}
\end{gather}
for some choice of $L_\lambda $ and $A_\lambda $. This equation arises from the compatibility condition of the two dif\/ferential systems
\begin{gather*}
\frac{\partial \Psi}{\partial \lambda } = L_\lambda \Psi, \quad \frac{\partial \Psi}{\partial t } = A_\lambda \Psi.
\end{gather*}
Since the monodromy of the former system $\partial \Psi/\partial \lambda = L_\lambda \Psi$ is independent of $t$ if the equation~(\ref{Lax1}) is satisf\/ied, (\ref{Lax1})~is called the isomonodromic deformation equation.

Another type of the Lax equation is of the form
\begin{gather}
\frac{\partial X_\lambda }{\partial t} = [X_\lambda , A_\lambda ],\label{Lax2}
\end{gather}
which is called the isospectral deformation equation because the eigenvalues of the matrix $X_\lambda $ is independent of~$t$. There are several systematic ways to construct isospectral deformation equations~\cite{AMV}. In particular, a Lie algebraic method have been often employed. Let $\mathfrak{g}$ be a Lie algebra. On the dual space $\mathfrak{g}^*$, there exists a canonical Poisson structure called the Lie--Poisson structure. If $\mathfrak{g}$ is equipped with a nondegenerate bilinear symmetric form, the Lie--Poisson structure is also def\/ined on~$\mathfrak{g}$. Let $P \colon T^*\mathfrak{g} \to T\mathfrak{g}$ be the Poisson tensor and $F \colon \mathfrak{g} \to \mathbb{C}$ a smooth function. Then, the vector f\/ield $PdF$ on $\mathfrak{g}$ can be expressed as the Lax equation~(\ref{Lax2}) with some $X_\lambda , A_\lambda \in \mathfrak{g}$~\cite{AMV}.

It is notable that the isospectral deformation equation (\ref{Lax2}) is completely integrable for most examples, although the isomonodromic deformation equation (\ref{Lax1}) is not in general; it is believed that solutions of an isomonodromic deformation equation def\/ine new functions called the Painlev\'{e} transcendents.

In Nakamura~\cite{Nak}, a way to obtain the isospectral deformation equation (\ref{Lax2}) from the isomonodromic deformation equation~(\ref{Lax1}) by a certain scaling of the time~$t$ is proposed, which is called the autonomous limit. She proved that the autonomous limits of 6-types of two dimensional Painlev\'{e} equations and 40-types of four dimensional Painlev\'{e} equations are completely integrable. Such relations of Painlev\'{e} systems with autonomous integrable systems are known between Gaudin model and Schlesinger system, and also found in~\cite{LO} (Painlev\'{e}--Calogero correspondence).

The purpose in the present paper is opposite; a way to construct the isomonodromic deformation equation (\ref{Lax1}) from the isospectral deformation equation (\ref{Lax2}) will be proposed. Let $\mathfrak{g}$ be a simple Lie algebra over~$\mathbb{C}$. Consider the set of $\mathfrak{g}$-valued polynomials of degree $n$
\begin{gather*}
\mathfrak{g}_n:= \big\{ X_\lambda := X_0 \lambda ^n + X_1\lambda ^{n-1} + \dots + X_n \, | \, X_i\in \mathfrak{g}\big\},
\end{gather*}
with the indeterminate $\lambda $. This set $\mathfrak{g}_n$ is equipped with a structure of a Lie algebra by a certain Lie bracket. At f\/irst, the isospectral deformation equation (\ref{Lax2}) on $\mathfrak{g}_n$ is constructed with the aid of the bi-Poisson theory of Magri et al.~\cite{Mag1, Mag2, Mag3, Mag4}. Isospectral deformation equations obtained in this method are shown to be completely integrable (Theorem~\ref{theorem2.4}). Next, we restrict the equations onto a symplectic leaf. Let $\varphi _1 , \dots ,\varphi _N$ be Casimir functions of an underlying Poisson structure on $\mathfrak{g}_n$.
A symplectic leaf $S$ is def\/ined by the level surface of them as
\begin{gather*}
S:=\{ \varphi _i= \alpha _i \, (\text{const}) \, | \, i=1, \dots ,N\}.
\end{gather*}
Restricted on the leaf $S$, the isospectral deformation equation (\ref{Lax2}) becomes an integrable Hamiltonian system. Since the matrix $X_\lambda \in \mathfrak{g}_n$ depends on the parameters $\alpha := (\alpha_1, \dots ,\alpha _N)$, it is denoted as $X_\lambda = X_\lambda (t, \alpha )$.

Now suppose that there exists a parameter, say $\alpha _{j}$, such that the following condition holds
\begin{gather}
\frac{\partial X_\lambda }{\partial \alpha _{j}}(t, \alpha ) = \frac{\partial A_\lambda }{\partial \lambda }.\label{1-8}
\end{gather}
Equation~(\ref{Lax2}) is put together with equation~(\ref{1-8}) to yield
\begin{gather*}
\frac{\partial X_\lambda}{\partial t}(t,\alpha ) + \frac{\partial X_\lambda }{\partial \alpha _{j}}(t, \alpha )
 = [X_\lambda , A_\lambda ]+\frac{\partial A_\lambda }{\partial \lambda }.
\end{gather*}
Def\/ine the Lax matrix $L_\lambda $ by
\begin{gather*}
L_\lambda := X_\lambda (t, \alpha )|_{\alpha _{j} = t},
\end{gather*}
where the parameter $\alpha _{j}$ satisfying the condition (\ref{1-8}) is replaced by~$t$. Then, the above equation is rewritten as the isomonodromic deformation equation (\ref{Lax1}).

Remark that the isomonodromic deformation equation (\ref{Lax1}) is equivalent to the zero curvature condition of the connection $1$ form $L_\lambda d\lambda + A_\lambda dt$ on a vector bundle over the $(t,\lambda )$-space, while the condition (\ref{1-8}) is the exactness condition of the connection $1$ form $X_\lambda d\lambda + A_\lambda d\alpha _j$.

This method is demonstrated for the following three cases (I) $\mathfrak{g} = \mathfrak{sl}(2, \mathbb{C}),\,n = 2$,
(II)~$\mathfrak{g} = \mathfrak{sl}(2, \mathbb{C}),\,n = 3$ and (III) $\mathfrak{g} = \mathfrak{so}(5, \mathbb{C}),\,n =1$.
For the case (I), the f\/irst, second and fourth Painlev\'{e} equations (\ref{2dimP1}), (\ref{2dimP2}), (\ref{2dimP4})
will be obtained in Section~\ref{section3}.

More generally, for $\mathfrak{g} = \mathfrak{sl}(2, \mathbb{C})$ with general $n$, it seems that one can obtain several Painlev\'{e} hierarchies of dimension $2n-2$, including the f\/irst Painlev\'{e} hierarchy $(\text{P}_\text{I})_m$~\cite{Koi, Kud, Shi}, the second-f\/irst Painlev\'{e} hierarchy~$(\text{P}_\text{II-1})_m$ \cite{CJM, CJP, Koi, Kud}, the second-second Painlev\'{e} hierarchy~$(\text{P}_\text{II-2})_m$ and the fourth Painlev\'{e} hierarchy~$(\text{P}_\text{IV})_m$ \cite{GJP, Koi}. They are $2m$-dimensional Hamiltonian PDEs of the form ($m=n-1$)
\begin{gather*}
\frac{\partial q_j}{\partial t_i} = \frac{\partial H_i}{\partial p_j},\qquad
\frac{\partial p_j}{\partial t_i} = -\frac{\partial H_i}{\partial q_j},\qquad j=1,\dots ,m, \qquad i=1,\dots ,m, \\
H_i = H_i(q_1, \dots ,q_m, p_1, \dots ,p_m, t_1, \dots ,t_m)
\end{gather*}
consisting of $m$ Hamiltonians $H_1, \dots ,H_m$ with $m$ independent variables $t_1, \dots ,t_m$. When $m=1$ (the case (I)), $(\text{P}_\text{I})_1$ and $(\text{P}_\text{IV})_1$ are reduced to the f\/irst and fourth Painlev\'{e} equations, respectively. Both of $(\text{P}_\text{II-1})_1$ and $(\text{P}_\text{II-2})_1$ coincide with the second Painlev\'{e} equation, while they are dif\/ferent systems for $m\geq 2$. When $m=2$ (the case~(II)), Hamiltonians of $(\text{P}_\text{I})_2$, $(\text{P}_\text{II-1})_2$, $(\text{P}_\text{II-2})_2$ and $(\text{P}_\text{IV})_2$ are given by
\begin{align}
& (\text{P}_\text{I})_2 && \begin{cases}
 H_1=2p_2p_1 + 3p_2^2q_1 + q_1^4 - q_1^2q_2 - q_2^2 - t_1q_1 + t_2\big(q_1^2 - q_2\big), \\
H_2= p_1^2 + 2p_2p_1q_1 - q_1^5 + p_2^2q_2 + 3q_1^3q_2 - 2q_1q_2^2 \\
\hphantom{H_2=}{} + t_1 \big(q_1^2-q_2\big) + t_2\big(t_2q_1 + q_1q_2 - p_2^2\big),
\end{cases}\label{4dimP1}\\
& (\text{P}_\text{II-1})_2 && \begin{cases}
 H_1 = 2p_1p_2 - p_2^3-p_1q_1^2 + q_2^2 - t_1p_2 + t_2p_1 + 2 \alpha q_1, \\
 H_2= -p_1^2 + p_1p_2^2 + p_1p_2q_1^2 + 2p_1q_1q_2 \\
\hphantom{H_2=}{} + t_1p_1 + t_2\big(t_2p_1 - p_1q_1^2 + p_1p_2\big) - \alpha (2p_2q_1 + 2q_2 + 2t_2q_1),
\end{cases}\label{4dimP21}\\
& (\text{P}_\text{II-2})_2 && \begin{cases}
 H_1=p_1p_2 - p_1q_1^2 - 2p_1q_2 + p_2q_1q_2 + q_1q_2^2 + q_2t_1 + t_2 (q_1q_2-p_1)+ \alpha q_1, \\
 H_2=p_1^2 - p_1p_2q_1 + p_2^2q_2 - 2p_1q_1q_2 - p_2q_2^2 + q_1^2q_2^2 \\
\hphantom{H_2=}{} + t_1( q_1q_2- p_1) - t_2\big(p_1q_1+q_2^2 + q_2t_2\big) + \alpha p_2,
\end{cases}\hspace*{-10mm}\label{4dimP22}\\
& (\text{P}_\text{IV})_2 && \begin{cases}
 H_1=p_1^2 + p_1p_2 - p_1q_1^2 + p_2q_1q_2- p_2q_2^2 - t_1p_1 + t_2p_2q_2 + \alpha q_2 + \beta q_1, \\
H_2= p_1p_2q_1 - 2p_1p_2q_2 - p_2^2 q_2 + p_2q_1q_2^2 \\
\hphantom{H_2=}{} + p_2q_2 t_1+ t_2\big(p_1p_2 - p_2q_2^2 + p_2q_2t_2\big) + (p_1-q_1q_2 + q_2t_2) \alpha - \beta p_2 ,
\end{cases}\hspace*{-10mm}\label{4dimP4}
\end{align}
respectively, with arbitrary parameters $\alpha, \beta \in \mathbb{C}$. These systems will be obtained from the case~(II) $\mathfrak{g} = \mathfrak{sl}(2, \mathbb{C})$, $n = 3$ in Section~\ref{section4}. In our method, such Hamiltonian PDEs are obtained if there are several Hamiltonian systems written in Lax form (\ref{Lax2}), and if there are several parameters satisfying~(\ref{1-8}); such parameters will be replaced by distinct times $t_1, t_2,\dots $.

We will f\/ind other $4$-dimensional Painlev\'{e} systems with Hamiltonian functions
\begin{gather}
H_{(1,1,2,0)} = -p_1^2q_1 - 2p_1q_1^2 + 2p_1q_2 - 2p_1p_2q_2 - 2p_2q_1q_2 \nonumber \\
\hphantom{H_{(1,1,2,0)} =}{} + (2p_1q_1 + 2p_2q_2)t + (2\alpha _2 + 2\beta_2 )q_1 + 2\beta_2 p_1 + 2\beta_3 p_2, \label{1120} \\
H_{(-1,4,1,2)} = p_1 - p_2^2 - 2p_1q_1q_2 - p_2q_2^2 + 2 \beta_3 q_2 + 2\beta_5 q_1 + p_2t, \label{-1412} \\
H_{\text{Cosgrove}} = -4p_1p_2 - 2p_2^2q_1 - \frac{73}{128}q_1^4 + \frac{11}{8}q_1^2q_2- \frac{1}{2}q_2^2 - q_1t - \frac{\alpha_2}{48}\left( q_1 + \frac{\alpha_2}{6} \right) q_1^2,\label{4dimCos}
\end{gather}
where $\alpha_i, \beta_i \in \mathbb{C}$ are arbitrary parameters (the subscripts for parameters are related to the weighted degrees so that the Hamiltonian functions become quasihomogeneous, see below). The f\/irst two systems will be also obtained from the case~(II). As far as the author knows, these systems have not appeared in the literature. The last one $H_{\text{Cosgrove}}$ will be obtained from the case~(III) $\mathfrak{g} = \mathfrak{so}(5, \mathbb{C})$, $n =1$ in Section~\ref{section5}. If we rewrite the system as a fourth order single equation of $q_1 = y$, we obtain
\begin{gather}
y'''' = 18 yy'' + 9(y')^2 - 24y^3 + 16t + \alpha y \left(y + \frac{1}{9}\alpha \right).\label{Cos}
\end{gather}
This equation was given in~\cite{Cos}, denoted by F-VI, without a proof that it has the Painlev\'{e} property. Since this system is obtained as the isomonodromic deformation equation in this paper, this equation actually enjoys the Painlev\'{e} property. In~\cite{Cos} it is conjectured that this equation def\/ines a new Painlev\'{e} transcendent (i.e., it is not reduced to known equations). Another expression of the Hamiltonian function of the same system is
\begin{gather}
\widetilde{H}_{\text{Cosgrove}} = 2p_1p_2 - \frac{18}{13}p_2^2q_1 - \frac{2}{169}q_1^4
 - \frac{180}{13}q_1^2q_2 + 6q_2^2 - 8q_1t + \frac{8}{9}\alpha _2q_1^3 + \frac{8}{27}\alpha _2^2q_1^2.\label{4dimCos2}
\end{gather}
The corresponding Hamiltonian system is also reduced to (\ref{Cos}).

Note that all of the Hamiltonian functions above are polynomials in both of the independent variables and the dependent variables. Furthermore, they are semi-quasihomogeneous functions. In general, a polynomial $H(x_1, \dots ,x_n)$ is called a quasihomogeneous polynomial if there are integers $a_1, \dots ,a_n$ and $h$ such that
\begin{gather}
H\big(\lambda ^{a_1}x_1 , \dots ,\lambda ^{a_n}x_n\big) = \lambda ^h H(x_1, \dots ,x_n)\label{quasi}
\end{gather}
for any $\lambda \in \mathbb{C}$. A polynomial $H$ is called a semi-quasihomogeneous if $H$ is decomposed into two polynomials as $H = H^P + H^N$, where $H^P$ satisf\/ies (\ref{quasi}) and $H^N$ satisf\/ies
\begin{gather*}
H^N\big(\lambda ^{a_1}x_1 , \dots ,\lambda ^{a_n}x_n\big) \sim o\big(\lambda ^h\big), \qquad |\lambda | \to \infty.
\end{gather*}
The integer $\operatorname{wdeg}(H) := h$ is called the weighted degree of $H$ with respect to the weight $\operatorname{wdeg}(x_1, \dots ,x_n) := (a_1 ,\dots ,a_n)$. For example, if we def\/ine degrees of variables by $\operatorname{wdeg} (q,p,t) = (2,3,4)$ for $H_{\text{I}}$, $\operatorname{wdeg} (q,p,t) = (1,2,2)$ for $H_{\text{II}}$ and $\operatorname{wdeg} (q,p,t) = (1,1,1)$ for $H_{\text{IV}}$, then Hamiltonian functions have the weighted degrees $6$, $4$ and $3$, respectively (Table~\ref{table1}). The weights for four dimensional systems above are shown in Table~\ref{table2}. In this paper, these weights are naturally obtained from a suitable def\/inition of weights of entries of a matrix $X_\lambda \in \mathfrak{g}_n$ and the spectral parameter~$\lambda $. In particular, the weights of the Hamiltonian functions are closely related to the exponents of simple Lie algebras because the Hamiltonian functions are essentially Ad-invariant polynomials of simple Lie algebras. See~\cite{Chi1, Chi2, Chi3} for the detailed study of the weights of the Painlev\'{e} equations.

\begin{table}[h]\centering\caption{Weights for two dimensional Painlev\'{e} equations.}\label{table1}\vspace{1mm}
\begin{tabular}{|c||c|c|}
\hline
 & $\operatorname{wdeg}(q,p,t)$ & $\operatorname{wdeg} (H)$ \\ \hline \hline
$\text{P}_\text{I}$ & $(2,3,4)$ & 6 \\ \hline
$\text{P}_\text{II}$ & $(1,2,2)$ & 4 \\ \hline
$\text{P}_\text{IV}$ & $(1,1,1)$ & 3 \\ \hline
\end{tabular}
\end{table}

\begin{table}[h]\centering\caption{Weights for four dimensional Painlev\'{e} equations.}\label{table2}\vspace{1mm}
\begin{tabular}{|c||c|c|c|}
\hline
 & $\operatorname{wdeg}(q_1,p_1,q_2,p_2)$ & $\operatorname{wdeg} (t_1, t_2)$ & $\operatorname{wdeg} (H_1, H_2)$ \\ \hline \hline
$(\text{P}_\text{I})_2$ & $(2,5,4,3)$ & $6,4$ & $8,10$ \\ \hline
$(\text{P}_\text{II-1})_2$ & $(1,4,3,2)$ & $4,2$ & $6,8$ \\ \hline
$(\text{P}_\text{II-2})_2$ & $(1,3,2,2)$ & $3,2$ & $5,6$ \\ \hline
$(\text{P}_\text{IV})_2$ & $(1,2,1,2)$ & $2,1$ & $4,5$ \\ \hline
$H_{(1,1,2,0)}$ & $(1,1,2,0)$ & $1$ & $3$ \\ \hline
$H_{(-1,4,1,2)}$ & $(-1,4,1,2)$ & $2$ & $4$ \\ \hline
$H_{\text{Cosgrove}}$ & $(2,5,4,3)$ & $6$ & $8$ \\ \hline
\end{tabular}
\end{table}

\section{Settings}\label{section2}

\subsection[Lie--Poisson structure on $\mathfrak{g}_n$]{Lie--Poisson structure on $\boldsymbol{\mathfrak{g}_n}$}\label{section2.1}

We def\/ine a multi-Poisson structure on a certain Lie algebra following Magri et al.~\cite{Mag1,Mag2,Mag3,Mag4}. Let $(\mathfrak{g}, [\, \cdot \,, \,\cdot \,])$ be a simple Lie algebra over $\mathbb{C}$. Consider the set of $\mathfrak{g}$-valued polynomials of degree~$n$
\begin{gather*}
\mathfrak{g}_n:= \big\{ X_\lambda := X_0 \lambda ^n + X_1\lambda ^{n-1} + \dots + X_n \, | \, X_i\in \mathfrak{g}\big\},
\end{gather*}
with the indeterminate $\lambda $. The bracket def\/ined by
\begin{gather*}
[X_\lambda ,Y_\lambda ]_n := [X_n, Y_n] + \lambda ([X_n, Y_{n-1}] + [X_{n-1}, Y_n]) + \cdots \\
\hphantom{[X_\lambda ,Y_\lambda ]_n :=}{} + \lambda ^n \left( [X_0, Y_n] + [X_1, Y_{n-1}] + \dots + [X_n, Y_0] \right)
\end{gather*}
introduces the structure of a Lie algebra on $\mathfrak{g}_n$. Note that $[X_\lambda , Y_\lambda ]_n$ coincides with $[X_\lambda , Y_\lambda ]$ expanded in $\lambda $ and truncated at degree~$n$.

It is known that the dual space $\mathfrak{g}^*$ of any Lie algebra $\mathfrak{g}$ is equipped with a~canonical Poisson structure called the Lie--Poisson structure. If a nondegenerate symmetric bilinear form \smash{$\eta \colon \mathfrak{g}\times \mathfrak{g} \to \mathbb{C}$} is def\/ined on $\mathfrak{g}$,
it induces a Lie--Poisson structure on $\mathfrak{g}$. For functions $F, G\colon \mathfrak{g} \to \mathbb{C}$, the Poisson bracket on $\mathfrak{g}$ is def\/ined by $\{ F, G\}(X) = \eta (X, [\nabla F(X), \nabla G(X)])$, where $\nabla F(X) \in \mathfrak{g}$ is def\/ined through $(dF)_X(Y) = \eta (\nabla F(X), Y)$.
To give the Lie--Poisson structure on $\mathfrak{g}_n$, we def\/ine a nondegenerate symmetric bilinear form $\eta$ on $\mathfrak{g}_n$ by
\begin{gather*}
\eta (X_\lambda , Y_\lambda ):= \sum^n_{i=0} \operatorname{Tr}(X_iY_{n-i}),
\end{gather*}
by which $\mathfrak{g}_n$ is identif\/ied with its dual. For a smooth function $F\colon \mathfrak{g}_n \to \mathbb{C}$, def\/ine the gradient $\nabla F \in \mathfrak{g}_n$ through $ (dF)(Y_\lambda ) = \eta (\nabla F, Y_\lambda )$, and def\/ine $\nabla _iF\in \mathfrak{g}$ by
\begin{gather*}
 \nabla F = (\nabla _n F) \lambda ^n + (\nabla _{n-1}F) \lambda ^{n-1} + \dots + \nabla _0 F.
\end{gather*}
Using them, the Lie--Poisson bracket on $\mathfrak{g}_n$ is given by
\begin{gather*}
\{ F, G\}_0 := \eta (X_\lambda , [\nabla F, \nabla G]_n) \\
\hphantom{\{ F, G\}_0}{} = \operatorname{Tr}(X_0 \cdot [\nabla _0F, \nabla _0G])
 + \operatorname{Tr}\left( X_1\cdot ( [\nabla _0F, \nabla _1G]+[\nabla _1F, \nabla _0G]) \right) +\cdots \\
\hphantom{\{ F, G\}_0=}{} + \operatorname{Tr} (X_n\cdot ( [\nabla _0F, \nabla _nG] + \dots + [\nabla _nF, \nabla _0G])) \\
\hphantom{\{ F, G\}_0}{}= -\operatorname{Tr} (\nabla _0F \cdot ([X_0, \nabla _0G]+[X_1, \nabla _1G]+ \dots + [X_n, \nabla _nG])) - \cdots \\
\hphantom{\{ F, G\}_0=}{} -\operatorname{Tr}(\nabla _{n-1}F \cdot ([X_{n-1}, \nabla _0G] + [X_n, \nabla _1G])) - \operatorname{Tr} (\nabla _nF \cdot [X_n, \nabla _0G]).
\end{gather*}
The Poisson tensor (bivector) $P_0 \colon T^*\mathfrak{g}_n \to T\mathfrak{g}_n$ is def\/ined so that
\begin{gather*}
\{ F, G\}_0 = dF (P_0dG) = \eta (\nabla F, P_0dG) = \sum^n_{i=0} \operatorname{Tr}(\nabla _iF \cdot (P_0dG)_i).
\end{gather*}
This implies
\begin{gather*}
-(P_0dG)_0 = [X_0, \nabla _0G]+[X_1, \nabla _1G]+ \dots + [X_n, \nabla _nG],\\
\cdots\cdots\cdots\cdots\cdots\cdots\cdots\cdots\cdots\cdots\cdots\cdots\cdots\cdots\cdots\\
-(P_0dG)_{n-1} = [X_{n-1}, \nabla _0G] + [X_n, \nabla _1G] ,\\
-(P_0dG)_{n} = [X_{n}, \nabla _0G].
\end{gather*}
The following expression is useful
\begin{gather*}
P_0\colon \ dG \mapsto - \left(
\begin{matrix}
[X_0, \,\cdot \,] & [X_1, \,\cdot \,] & \dots & [X_n, \,\cdot \,] \\
\vdots & & \raisebox{-0.1cm}{\rotatebox{75}{$\ddots$}} & \\
 [X_{n-1}, \,\cdot \,] & [X_n, \,\cdot \,] & & \\
 [X_{n}, \,\cdot \,]& & &
\end{matrix}
\right) \left(
\begin{matrix}
\nabla _0G \\
\vdots \\
\nabla _{n-1}G \\
\nabla _nG
\end{matrix}\right) \\
\hphantom{P_0\colon \ dG \mapsto}{}
 = \left(
\begin{matrix}
[\nabla _0G, X_0]+[\nabla _1G, X_1] + \dots + [\nabla _nG, X_n] \\
\vdots \\
 [\nabla _0G, X_{n-1}]+[\nabla _1G, X_n] \\
 [\nabla _0G, X_n]
\end{matrix}\right).
\end{gather*}
It is also represented as a matrix as follows. Let $A = A(X)$ be a representation matrix of the mapping
\begin{gather*}
T^* \mathfrak{g} (\simeq \mathfrak{g}) \to \mathfrak{g}, \qquad dG \mapsto [X, \nabla G],
\qquad G\colon \ \mathfrak{g} \to \mathbb{C},\qquad X\in \mathfrak{g}
\end{gather*}
with respect to some coordinates on $\mathfrak{g}$ (here $\nabla G$ is the gradient on $\mathfrak{g}$). By the def\/inition, $-A$~is a~Poisson tensor of the Lie--Poisson structure on $\mathfrak{g}$. Since $A(X)$ is linear in $X$, $A(X_\lambda )$ is expanded as $A(X_\lambda ) = \lambda ^n A(X_0) + \lambda ^{n-1}A(X_1) + \dots + A(X_n)$. Putting $A(X_j) = A_j$, $P_0$ is represented as an $(n+1)\dim (\mathfrak{g}) \times (n+1)\dim (\mathfrak{g})$ matrix
\begin{gather*}
P_0 = - \left(
\begin{matrix}
A_0 & \dots & A_{n-1} & A_n \\
A_1 & \dots & A_n & \\[0.0cm]
\vdots & \raisebox{-0.1cm}{\rotatebox{75}{$\ddots$}} & & \\
A_n & & &
\end{matrix}
\right).
\end{gather*}

In what follows, suppose $\dim (\mathfrak{g}) = d$, $\operatorname{rank} (\mathfrak{g}) = h$ and let $m_1 ,\dots ,m_h$ be exponents of~$\mathfrak{g}$.
Let $(y_1,\dots ,y_d)$ be coordinates on~$\mathfrak{g}$. It is known that the Casimir functions of the Lie--Poisson structure on~$\mathfrak{g}$ (i.e., a~function $\varphi $ satisfying $\{ F, \varphi \} = 0$ for any $F\colon \mathfrak{g} \to \mathbb{C}$) are the Ad-invariant polynomials denoted by $\varphi _i (y_1, \dots ,y_d)$, $i=1,\dots ,h$, and they satisfy $\deg (\varphi _i) = m_i + 1$.

Let $x_j := (x_{j,1}, \dots ,x_{j,d})$ be coordinates on the $j$-th copy of $\mathfrak{g}$ (coordinate expression for $X_j$) and $(x_0, \dots ,x_n)$ coordinates on $\mathfrak{g}_n$. We def\/ine the weighted degrees of variables to be
\begin{gather*}
\operatorname{wdeg}(x_j) = \operatorname{wdeg}(x_{j,\alpha }) = j, \qquad \operatorname{wdeg}(\lambda )=1.
\end{gather*}
Then, $X_\lambda $ is quasihomogeneous (homogeneous in the weighted sense) of $\operatorname{wdeg}(X_\lambda ) = n$. Substituting $y_\alpha = x_{0,\alpha }\lambda ^n + x_{1,\alpha }\lambda ^{n-1} + \dots +x_{n,\alpha }$ into $\varphi _i(y_1, \dots ,y_d)$ and expanding it in $\lambda $ provide
\begin{gather*}
\varphi _i(y_1, \dots ,y_d) = \varphi _{i,0}(x_0, \dots ,x_n) \lambda ^{(m_i+1)n}+\varphi _{i,1}(x_0, \dots ,x_n) \lambda ^{(m_i+1)n-1}+ \cdots\\
\hphantom{\varphi _i(y_1, \dots ,y_d) =}{} + \varphi _{i,(m_i+1)n}(x_0, \dots ,x_n), \qquad i=1,\dots ,h,
\end{gather*}
which def\/ines polynomials $\varphi _{i,j}$ on $\mathfrak{g}_n$ satisfying
\begin{gather*}
\deg (\varphi _{i,j}) = m_i+1,\qquad \operatorname{wdeg}(\varphi _{i,j}) = j.
\end{gather*}

\begin{Proposition}\label{proposition2.1}\quad
\begin{enumerate}\itemsep=0pt
\item[$(i)$] $\varphi _{i,j}$ depends only on $(x_0, \dots ,x_j)$ for $0\leq j\leq n-1$.
\item[$(ii)$] $\varphi _{i,j}(x_0, x_1,\dots ,x_n) = \varphi _{i, (m_i+1)n-j} (x_n, \dots ,x_1,x_0)$.
\item[$(iii)$] For each $i$, $j$, $\alpha $, the derivative $\partial \varphi _{i,j+k}/\partial x_{k,\alpha }$ is independent of $k=0,\dots ,n$.
\item[$(iv)$] For each $i$, $j$, the gradient $\nabla _k \varphi _{i,j+k}$ is independent of $k=0,\dots ,n$.
\item[$(v)$] For each $i$, $j$, $k$, the equality
\begin{gather*}
\sum^n_{l=0}A_l \frac{\partial \varphi _{i,j+k-l}}{\partial x_k}
 = \sum^n_{l=0}A_l \frac{\partial \varphi _{i,j-l}}{\partial x_0} = 0
\end{gather*}
holds.
\item[$(vi)$] The Casimir functions of the Lie--Poisson structure $P_0$ on $\mathfrak{g}_n$ are
\begin{gather*}
\varphi _{i,(m_i+1)n-j}, \qquad i=1,\dots ,h,\qquad j=0,\dots ,n.
\end{gather*}
\end{enumerate}
\end{Proposition}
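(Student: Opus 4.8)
The plan is to obtain the five local statements $(i)$--$(v)$ from a handful of generating-function identities for the single Ad-invariant polynomial $\varphi_i$, and then to deduce the Casimir statement $(vi)$ from them. Write $y_\alpha(\lambda)=\sum_{k=0}^n x_{k,\alpha}\lambda^{n-k}$ for the substitution, so that by definition $\varphi_i(y(\lambda))=\sum_j\varphi_{i,j}\lambda^{(m_i+1)n-j}$, and set $\varphi_{i,s}:=0$ for $s\notin\{0,\dots,(m_i+1)n\}$. Statement $(i)$ is purely a matter of weights: since $\operatorname{wdeg}(x_{k,\alpha})=k\ge0$ and every monomial of $\varphi_{i,j}$ has weighted degree exactly $j$, no variable $x_{k,\alpha}$ with $k>j$ can appear, so $\varphi_{i,j}$ involves only $x_0,\dots,x_j$ when $j\le n-1$. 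For $(ii)$ I would exploit the reversal $\lambda\mapsto1/\mu$: the polynomial $\mu^n y_\alpha(1/\mu)$ has the reversed coefficient tuple $(x_n,\dots,x_0)$, while homogeneity of $\varphi_i$ of degree $m_i+1$ carries the coefficient $\varphi_{i,j}(x)$ onto the power $\mu^{j}$; matching coefficients of $\mu^{j}$ then gives $\varphi_{i,j}(x_0,\dots,x_n)=\varphi_{i,(m_i+1)n-j}(x_n,\dots,x_0)$.

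The key computation is $(iii)$. The chain rule applied to the defining expansion gives $\partial_{x_{k,\alpha}}\varphi_i(y(\lambda))=\big(\partial\varphi_i/\partial y_\alpha\big)(y(\lambda))\,\lambda^{n-k}$, because $\partial y_\beta/\partial x_{k,\alpha}=\delta_{\alpha\beta}\lambda^{n-k}$. Writing the $k$-independent factor as $\big(\partial\varphi_i/\partial y_\alpha\big)(y(\lambda))=\sum_r\psi_{\alpha,r}\lambda^{m_in-r}$ and comparing coefficients of $\lambda^{(m_i+1)n-j}$ yields $\partial\varphi_{i,j}/\partial x_{k,\alpha}=\psi_{\alpha,\,j-k}$; hence $\partial\varphi_{i,j+k}/\partial x_{k,\alpha}=\psi_{\alpha,j}$ does not depend on $k$, which is $(iii)$. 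Then $(iv)$ is immediate, since $\nabla_k$ is obtained from the partials through the trace form on the $k$-th copy of $\mathfrak{g}$, one and the same linear isomorphism for every $k$, so the $k$-independence descends from the partials to the gradients.

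For $(v)$ the first equality is $(iii)$ read termwise with base index $j-l$. The vanishing is the coordinate form of Ad-invariance: each $\varphi_i$ satisfies $[X,\nabla\varphi_i(X)]=0$, i.e.\ $A(X)\,d\varphi_i(X)=0$, for every $X\in\mathfrak{g}$. Substituting $X=X_\lambda$, so that $A(X_\lambda)=\sum_l A_l\lambda^{n-l}$ and (by the identity above with $k=0$) $d\varphi_i(y(\lambda))=\sum_r\frac{\partial\varphi_{i,r}}{\partial x_0}\lambda^{m_in-r}$, and extracting the coefficient of $\lambda^{(m_i+1)n-j}$ gives $\sum_{l=0}^nA_l\,\partial\varphi_{i,j-l}/\partial x_0=0$.

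Finally $(vi)$. The Casimir condition $P_0\,d\varphi=0$ is $\sum_{l=p}^n[X_l,\nabla_{l-p}\varphi]=0$ for $p=0,\dots,n$. Taking $\varphi=\varphi_{i,M}$ with $M=(m_i+1)n-j$ and $0\le j\le n$, I rewrite $\nabla_{l-p}\varphi_{i,M}=\nabla_0\varphi_{i,m-l}$ by $(iv)$, where $m:=M+p$; the left side then reads $\sum_{l=p}^n[X_l,\nabla_0\varphi_{i,m-l}]$, which by $(v)$ equals $-\sum_{l=0}^{p-1}[X_l,\nabla_0\varphi_{i,m-l}]$. To kill these residual terms I would invoke $(i)$--$(ii)$: for $s\ge m_in+1$ the coefficient $\varphi_{i,s}$ equals, by $(ii)$, $\varphi_{i,j'}(x_n,\dots,x_0)$ with $j'=(m_i+1)n-s\le n-1$, which by $(i)$ depends only on $x_{n-j'},\dots,x_n$ and hence not on $x_0$, so $\nabla_0\varphi_{i,s}=0$; and for $l\le p-1$ one has $m-l\ge(m_i+1)n-j+1\ge m_in+1$ precisely because $j\le n$. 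Hence every residual term vanishes and $\varphi_{i,M}$ is a Casimir; that these \emph{exhaust} the Casimirs would follow by checking their functional independence and matching the count $h(n+1)$ with the corank of $P_0$. I expect the index bookkeeping in $(vi)$ to be the main obstacle: one must see that truncating the full sum of $(v)$ at $l=p$ leaves exactly the terms whose $\varphi$-index exceeds $m_in$, and the inequality $j\le n$ is what guarantees this, which is also why the Casimirs are indexed by $j=0,\dots,n$.
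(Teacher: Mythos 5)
Your proof is correct and follows essentially the same route as the paper's: the chain-rule/coefficient-comparison identity for $(iii)$--$(iv)$, Ad-invariance $A(X)\,d\varphi _i(X)=0$ evaluated at $X=y(\lambda )$ and expanded in $\lambda $ for $(v)$, and verification of $P_0\,d\varphi _{i,(m_i+1)n-j}=0$ row by row using $(v)$ for $(vi)$. The only difference is one of detail: where the paper says $(i)$, $(ii)$ ``follow from the definition'' and that $(vi)$ ``is verified with the aid of part~$(v)$'', you make those steps explicit --- the weighted-degree argument, the reversal $\lambda \mapsto 1/\mu $, and the index bookkeeping showing the residual terms with $l\leq p-1$ vanish because $\nabla _0\varphi _{i,s}=0$ for $s\geq m_in+1$ --- which is exactly the verification the paper leaves to the reader.
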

\begin{proof}
(i) and (ii) follow from the def\/inition of $\varphi _{i,j}$.

(iii) For $y_\alpha = \sum\limits^n_{k=0}\lambda ^{n-k}x_{k,\alpha }$, we have
\begin{gather*}
\frac{\partial \varphi _i}{\partial y_\alpha }= \frac{\partial x_{k,\alpha }}{\partial y_\alpha }\frac{\partial }{\partial x_{k,\alpha }}
 \sum^{(m_i+1)n}_{j=0}\lambda ^{(m_i+1)n-j}\varphi _{i,j} \\
\hphantom{\frac{\partial \varphi _i}{\partial y_\alpha }}{}
= \sum^{(m_i+1)n}_{j=0} \lambda ^{m_in-j+k}\frac{\partial \varphi _{i,j}}{\partial x_{k,\alpha }}
= \sum^{m_in + k}_{j=k} \lambda ^{m_in-j+k}\frac{\partial \varphi _{i,j}}{\partial x_{k,\alpha }}.
\end{gather*}
For the last equality, we used part~(i) combined with part~(ii). Thus we obtain
\begin{gather*}
\frac{\partial \varphi _i}{\partial y_\alpha } = \sum^{m_in}_{j=0} \lambda ^{m_in-j}\frac{\partial \varphi _{i,j+k}}{\partial x_{k,\alpha }}.
\end{gather*}
Since the left hand side is independent of $k$, so is each coef\/f\/icient of $\lambda ^{m_in-j}$ in the right hand side. Part~(iv) immediately follows from~(iii).

(v) The f\/irst equality is a consequence of part~(iii). Since $\varphi _i(y)$ is a Casimir function of the Lie--Poisson structure on $\mathfrak{g}$,
$\operatorname{Ad}\varphi _i =0$, where $A$ is a matrix def\/ined before. Substituting $y = \sum\limits^n_{k=0}\lambda ^{n-k}x_k$ yields{\samepage
\begin{gather*}
\begin{split}
& 0= A \frac{\partial \varphi _i}{\partial y}
= \big(\lambda ^nA_0 + \lambda ^{n-1}A_1 + \dots + A_n\big) \sum^{m_in}_{j=0} \lambda ^{m_in-j}\frac{\partial \varphi _{i,j+k}}{\partial x_{k}} \\
& \hphantom{0}{} = \sum_{j,l} \lambda ^{m_in-j+n-l}A_l \frac{\partial \varphi _{i,j+k}}{\partial x_{k}}
= \sum^{m_in+l}_{j=l} \lambda ^{m_in+n-j} \sum^n_{l=0} A_l\frac{\partial \varphi _{i,j+k-l}}{\partial x_{k}}.
\end{split}
\end{gather*}
This proves the second equality of (v).}

To prove (vi), it is suf\/f\/icient to show
\begin{gather*}
\left(
\begin{matrix}
A_0 & \dots & A_{n-1} & A_n \\
A_1 & \dots & A_n & \\
\vdots & \raisebox{-0.1cm}{\rotatebox{75}{$\ddots$}} & & \\
A_n & & &
\end{matrix}
\right)
\left(
\begin{matrix}
\partial \varphi _{i,(m_i+1)n-j}/\partial x_0 \\
\partial \varphi _{i,(m_i+1)n-j}/\partial x_1 \\
\vdots \\
\partial \varphi _{i,(m_i+1)n-j}/\partial x_n
\end{matrix}
\right) = 0
\end{gather*}
for $j=0,\dots ,n$. This is verif\/ied with the aid of part~(v).
\end{proof}

\begin{Example}\label{example2.2}
For $\mathfrak{g} = \mathfrak{sl} (2, \mathbb{C})$, we have $d=3$, $h=1$ and $m_i = m_1 = 1$. Denote a general element $X_\lambda \in \mathfrak{g}_n$ as
\begin{gather*}
X_\lambda = \lambda ^nX_0 + \lambda ^{n-1} X_1 + \dots +X_n \\
\hphantom{X_\lambda}{} = \lambda ^n \left(
\begin{matrix}
u_0 & v_0 \\
w_0 & -u_0
\end{matrix}
\right) + \lambda ^{n-1}\left(
\begin{matrix}
u_1 & v_1 \\
w_1 & -u_1
\end{matrix}
\right) + \dots + \left(
\begin{matrix}
u_n & v_n \\
w_n & -u_n
\end{matrix}
\right).
\end{gather*}
Let $(u_j, v_j, w_j)$ be coordinates on the $j$-th copy of $\mathfrak{g}$ and $(u_0,v_0,w_0, \dots ,u_n,v_n,w_n)$ coordinates on~$\mathfrak{g}_n$. Then,
\begin{gather*}
\nabla _j F = \left(
\begin{matrix}
\displaystyle \frac{1}{2}\frac{\partial F}{\partial u_j} & \displaystyle \frac{\partial F}{\partial w_j} \vspace{1mm}\\
\displaystyle \frac{\partial F}{\partial v_j} & \displaystyle -\frac{1}{2}\frac{\partial F}{\partial u_j}
\end{matrix}
\right), \qquad A_j = \left(
\begin{matrix}
0 & v_j & -w_j \\
-v_j & 0 & 2u_j \\
w_j & -2u_j & 0
\end{matrix}
\right).
\end{gather*}
The Casimir function on $\mathfrak{g}$ is given by $\varphi _i = \varphi = u^2 + vw$. Then, the functions $\varphi _{i,j} = \varphi _j$ are def\/ined by expanding
\begin{gather*}
\big(\lambda ^n u_0 + \dots + u_n\big)^2 + \big(\lambda ^n v_0 + \dots + v_n\big)\big(\lambda ^n w_0 + \dots + w_n\big)
\end{gather*}
in $\lambda $. This gives
\begin{gather*}
\varphi _j = \sum_{k+l = j} \left( u_ku_l + v_k w_l \right), \qquad j=0,\dots ,2n.
\end{gather*}
Note that they are coef\/f\/icients of $-\det X_\lambda $. The Casimir functions of $\mathfrak{g}_n$ are given by $\varphi _j$ for $j=n,\dots ,2n$.
\end{Example}

\subsection[Multi-Poisson structure on $\mathfrak{g}^0_n$]{Multi-Poisson structure on $\boldsymbol{\mathfrak{g}^0_n}$}\label{section2.2}

In general, a manifold $M$ is called a bi-Poisson manifold if
\begin{enumerate}\itemsep=0pt
\item[(i)] there are two Poisson brackets $\{ \,\, , \,\, \}_0$ and $\{ \,\, , \,\, \}_1$, and
\item[(ii)] the linear combination $\{ \,\, , \,\, \}_0 + t \{ \,\,, \,\, \}_1$ is also a Poisson bracket for any $t\in \mathbb{C}$.
\end{enumerate}

See \cite{Mag1, Mag2, Mag3, Mag4} for applications of bi-Poisson manifolds to integrable systems. Here, we introduce a bi-Poisson structure on~$\mathfrak{g}_n$ following~\cite{Mag2}. The shift operator $X_\lambda \mapsto X_{\lambda +t}$ def\/ines an automorphism of $\mathfrak{g}_n$ with a~parameter $t\in \mathbb{C}$. It induces a deformation, denoted by $\{ \,\, , \,\,\}_t$, of the Lie--Poisson bracket $\{ \,\, , \,\, \}_0$. Let
\begin{gather*}
\{ \,\, , \,\, \}_t = \{ \,\, , \,\, \}_0 + t \{ \,\, , \,\, \}_1 + \dots + t^{n+1} \{ \,\, , \,\, \}_{n+1} + \cdots
\end{gather*}
be its expansion. Magnano and Magri \cite{Mag2} proved that each $\{ \,\,, \,\, \}_i$, $i = 0, \dots ,n+1$, and their any linear combination satisfy the axiom of a Poisson bracket. Hence, $\mathfrak{g}_n$ has $n+2$ compatible Poisson brackets and it becomes a multi-Poisson manifold. Their Poisson tensors are
\begin{gather*}
P_1 = \left(
\begin{array}{@{\,}c|cccc@{\,}}
0 & 0 &0 & \dots & 0 \\ \hline
0 &-A_1 & -A_2 & \dots & -A_n \\[0.1cm]
\vdots & \vdots & \vdots & \rotatebox{80}{$\ddots$} & \\
0 & -A_{n-1} & -A_n & & \\
0 &-A_n & & & \\
\end{array}
\right), \\
 P_{k+1} = \left(
\begin{array}{@{\,}c|ccc|ccc@{\,}}
0 & 0 & \dots & 0 & 0 & \dots & 0 \\ \hline
0 & & & A_0 & & & \\
\vdots & & \rotatebox{80}{$\ddots$} & \vdots & & & \\
0 & A_0 & \dots & A_{k-1} & & & \\ \hline
0& & & & -A_{k+1} & \dots & -A_n \\
\vdots & & & & \vdots & \rotatebox{80}{$\ddots$} & \\
0& & & & -A_n & & \\
\end{array}
\right), \qquad k=1,\dots ,n-1, \\
 P_{n+1} = \left(
\begin{array}{@{\,}c|cccc@{\,}}
0 & 0 & \dots & 0 & 0 \\ \hline
0 & & & & A_0 \\
0 & & & A_0 & A_1 \\[0.1cm]
\vdots & & \rotatebox{80}{$\ddots$} & \vdots & \vdots \\
0 & A_0 & \dots & A_{n-2} & A_{n-1} \\
\end{array}
\right).
\end{gather*}
($P_0$ is the same as before). Let $\mathfrak{g}^0_n$ be a submanifold of $\mathfrak{g}_n$ def\/ined by $x_0 = \text{const}$;
\begin{gather*}
\mathfrak{g}_n^0:= \big\{ X_\lambda = X_0 \lambda ^n + X_1\lambda ^{n-1} + \dots + X_n \, | \, X_0 = \text{const}\big\} \subset \mathfrak{g}_n.
\end{gather*}
Since the f\/irst row and column of $P_1, \dots ,P_{n+1}$ are zero (i.e., $x_0 = (x_{0,1}, \dots ,x_{0,d})$ are Casimir functions of them), the restrictions of them on $\mathfrak{g}^0_n$ def\/ine a multi-Poisson structure on $\mathfrak{g}^0_n$, whose brackets and tensors are again denoted by $(\{ \,\,, \,\, \}_i, P_i)$. The tensors are given by
\begin{gather*}
 P_1 = \left(
\begin{array}{@{}cccc@{}}
-A_1 & -A_2 & \dots & -A_n \\[0.1cm]
 \vdots & \vdots & \rotatebox{80}{$\ddots$} & \\
 -A_{n-1} & -A_n & & \\
-A_n & & & \\
\end{array}
\right), \\
 P_{k+1} = \left(
\begin{array}{@{}ccc|ccc@{}}
 & & A_0 & & & \\
 & \rotatebox{80}{$\ddots$} & \vdots & & & \\
 A_0 & \dots & A_{k-1} & & & \\ \hline
 & & & -A_{k+1} & \dots & -A_n \\
 & & & \vdots & \rotatebox{80}{$\ddots$} & \\
 & & & -A_n & & \\
\end{array}
\right), \qquad k=1,\dots ,n-1, \\
P_{n+1} = \left(
\begin{array}{@{}cccc@{}}
 & & & A_0 \\
 & & A_0 & A_1 \\[0.1cm]
 & \rotatebox{80}{$\ddots$} & \vdots & \vdots \\
 A_0 & \dots & A_{n-2} & A_{n-1} \\
\end{array}
\right).
\end{gather*}

For $i=1,\dots ,h$ and $j=1,\dots ,(m_i+1)n$, def\/ine functions $\psi_{i,j}$ on $\mathfrak{g}^0_n$ by{\samepage
\begin{gather*}
\psi_{i,j}(x_1, \dots ,x_n) := \varphi _{i,j}|_{\mathfrak{g}_n^0} = \varphi _{i,j}|_{x_0 = \text{const}}
\end{gather*}
(we do not def\/ine $\psi_{i,0}$ because $\varphi _{i,0}$ is constant on $\mathfrak{g}_n^0$).}

\begin{Proposition}\label{proposition2.3}\quad
\begin{enumerate}\itemsep=0pt
\item[$(i)$] Casimir functions of $P_{k+1}$ are $\psi_{i,j}$, $i=1,{\dots},h$, for $j=1,2,{\dots},k$ and for \smash{$j=m_in+k+1,$} $m_in+k+2,\dots ,(m_i+1)n$.

\item[$(ii)$] Casimir functions of the combination $\lambda P_{k+1} - P_k$ are $\psi_{i,j}$, $i=1,\dots ,h$, for $j=1,2,\dots ,k-1$ and for $j=m_in+k+1, m_in+k+2,\dots ,(m_i+1)n$, and
\begin{gather*}
\lambda ^{m_in}\psi_{i,k} + \lambda ^{m_in-1}\psi_{i, k+1}+ \dots +\psi_{i,m_in+k}, \qquad i=1,\dots ,h.
\end{gather*}
\item[$(iii)$] Let $F\colon \mathfrak{g}^0_n \to \mathbb{C}$ be a smooth function. The differential equation for the vector field $(\lambda P_{k+1} - P_k)dF$ is expressed in
Lax form as
\begin{gather*}
\frac{d}{dt}X_\lambda = [X_\lambda , \nabla _kF], \qquad X_\lambda =\lambda ^nX_0+\lambda ^{n-1}X_1+\dots +X_n.
\end{gather*}
\item[$(iv)$] Define the function $G_{i,k,j}$ to be
\begin{gather*}
G_{i,k,j} = -\big( \lambda ^{j-1}\psi_{i,k} + \lambda ^{j-2} \psi_{i,k+1} + \dots + \psi_{i,k+j-1}\big).
\end{gather*}
Then, the equality
\begin{gather*}
P_{k+1}d\psi_{i,k+j} = P_kd\psi_{i,k+j-1} = (\lambda P_{k+1}-P_k)dG_{i,k,j}
\end{gather*}
holds for $i=1,\dots ,h$, $j=1,\dots ,m_in$ and $k=1,\dots ,n$. In particular, the vector field $P_{k+1}d\psi_{i,k+j}$ is independent of~$k$ and the equation for it is expressed in Lax form as
\begin{gather*}
\frac{d}{dt}X_\lambda = [X_\lambda , \nabla _kG_{i,k,j}].
\end{gather*}
\item[$(v)$] The vector fields $P_{k+1}d\psi_{i,k+j}$ for $i=1,\dots ,h$ and $j=1,\dots , m_in$ commute with each other $($note that it is zero when $j\notin \{ 1,\dots ,m_in\})$.
\end{enumerate}
\end{Proposition}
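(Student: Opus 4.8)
The plan is to reduce commutativity of the vector fields to an involutivity property of their Hamiltonians for a single one of the Poisson structures, and then to extract that involutivity from the bi-Hamiltonian recursion already recorded in part~(iv).

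First I would fix one value of $k$, say $k=1$, and use part~(iv) to write every vector field in the uniform Hamiltonian form $X_{i,j}:=P_{k+1}d\psi_{i,k+j}$; part~(iv) guarantees this expression does not depend on $k$, so the same $k$ may be used for all pairs $(i,j)$ at once. Because each $P_l$ is a genuine Poisson tensor, the commutator of two of its Hamiltonian vector fields obeys the standard identity $[P_{k+1}dF,P_{k+1}dG]=P_{k+1}d\{F,G\}_{k+1}$, which is just the Jacobi identity for $\{\cdot,\cdot\}_{k+1}$. Hence it suffices to establish the involution relations $\{\psi_{i,k+j},\psi_{i',k+j'}\}_{k+1}=0$ for all $i,i'$ and all admissible $j,j'$.

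To prove involution I would use only the recursion $P_{k+1}d\psi_{i,s}=P_kd\psi_{i,s-1}$ of part~(iv) together with the antisymmetry of the brackets. Substituting the recursion into one factor and then reading it backwards in the other yields the index-shifting identity
\[
\{\psi_{i,a},\psi_{i',b}\}_{k+1}=\{\psi_{i,a},\psi_{i',b-1}\}_{k}=\{\psi_{i,a+1},\psi_{i',b-1}\}_{k+1},
\]
so the bracket is invariant under $(a,b)\mapsto(a+1,b-1)$. Iterating drives the two indices in opposite directions while preserving their sum, and after finitely many steps one argument leaves the active range $\{k+1,\dots,k+m_\bullet n\}$ and becomes, by parts~(i) and~(ii), a Casimir of $P_{k+1}$; since a Casimir pairs trivially in $\{\cdot,\cdot\}_{k+1}$, the bracket vanishes there, hence everywhere along the chain. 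Conceptually this is the familiar fact that the coefficients in $\lambda$ of the pencil Casimir $\lambda^{m_in}\psi_{i,k}+\dots+\psi_{i,m_in+k}$ of $\lambda P_{k+1}-P_k$ identified in part~(ii) are in involution for every bracket of the pencil.

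The step that needs genuine care --- and the main obstacle --- is the bookkeeping at the ends of the chains: one must check that the Casimir levels of $P_{k+1}$ and of $P_k$ singled out in parts~(i)--(ii) are placed so that each shifted bracket actually terminates on a Casimir rather than on an undefined index, and that this termination works uniformly for the two exponent families $i$ and $i'$, whose ranges differ since $m_i$ depends on $i$. Once this is verified the cross-family case $i\neq i'$ follows from the same argument, the diagonal case $i=i'$ being a special instance; and the parenthetical vanishing of $X_{i,j}$ for $j\notin\{1,\dots,m_in\}$ is immediate, since then $\psi_{i,k+j}$ is itself a Casimir of $P_{k+1}$.
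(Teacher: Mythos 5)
Your proposal addresses only part (v): parts (i)--(iv) enter as black boxes (``already recorded in part~(iv)'', ``by parts~(i) and~(ii)'') but are never proved, so as a proof of the Proposition as stated it is incomplete. In the paper all five parts carry content: (i) and~(ii) are checked by direct computation using Proposition~\ref{proposition2.1}(v); (iii) comes from writing out the block structure of $\lambda P_{k+1}-P_k$ and summing the equations for $\lambda^{n-j}X_j$; and (iv) --- the recursion $P_{k+1}d\psi_{i,k+j}=P_kd\psi_{i,k+j-1}$ that your entire argument runs on --- is itself derived by expanding $(\lambda P_{k+1}-P_k)\,d\big(\lambda ^{m_in}\psi_{i,k} + \lambda ^{m_in-1}\psi_{i, k+1}+ \dots +\psi_{i,m_in+k}\big)=0$, i.e., from the pencil Casimir identified in~(ii). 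Without these, your staircase has no engine to run on and no terminal Casimirs to land on, so the omission is not cosmetic.

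Within part (v) your argument is sound, but you leave unverified precisely the step you yourself call ``the main obstacle,'' and it should have been closed --- it does close. Each full shift $(a,b)\mapsto(a+1,b-1)$ uses the recursion twice: the first half needs $b\in\{k+1,\dots ,k+m_{i'}n\}$, the second half needs $a\in\{k,\dots ,k+m_in-1\}$. Since $b$ strictly decreases, after finitely many steps either $b-1=k$, and then $\psi_{i',k}$ is a Casimir of $P_{k+1}$ by~(i), so the bracket vanishes; or the second half first becomes unavailable because $a=k+m_in$, in which case the half-shifted bracket $\{ \psi_{i,a},\psi_{i',b-1}\}_{k}$ already vanishes, because $\psi_{i,k+m_in}$ is a Casimir of $P_k$ (part~(i) with $k$ replaced by $k-1$). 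The cross-family case $i\neq i'$ needs no extra care, since termination only requires one of the two indices to exit its range. Granting (i)--(iv), this staircase (Lenard--Magri) iteration is a genuinely different mechanism from the paper's: the paper fixes $k=n$, pairs $\psi_{i,n+j}$ against the pencil Casimir $\lambda ^{m_{i'}n}\psi_{i',k} + \dots +\psi_{i',m_{i'}n+k}$, rewrites $P_{n+1}d\psi_{i,n+j}=(\lambda P_{k+1}-P_k)dG_{i,k,j}$ via~(iv), and kills the whole polynomial in $\lambda$ by a single antisymmetry step, then reads off its coefficients --- in effect your recursion summed into a generating function. Either route is fine; neither is a proof of the Proposition until (i)--(iv) are established.
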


\begin{proof}
(i) and (ii) can be verif\/ied by a straightforward calculation with the aid of Proposition~\ref{proposition2.1}(v). To prove (iii), note that the vector f\/ield $P_{k+1}dF$ is written as
\begin{gather*}
 P_{k+1}dF = \left(
\begin{array}{@{}ccc|ccc@{}}
 & & [X_0, \, \cdot \,] & & & \\
 & \rotatebox{80}{$\ddots$} & \vdots & & & \\
\,[X_0, \, \cdot \,] & \dots & [X_{k-1}, \, \cdot \,] & & & \\ \hline
 & & & -[X_{k+1}, \, \cdot \,] & \dots & -[X_n, \, \cdot \,] \\
 & & & \vdots & \rotatebox{80}{$\ddots$} & \\
 & & & -[X_n, \, \cdot \,] & & \\
\end{array}
\right) \left(
\begin{array}{@{}c@{}}
\nabla _1F \\
\vdots \\
\nabla _kF \\
\nabla _{k+1} F \\
\vdots \\
\nabla_n F
\end{array}
\right),
\end{gather*}
and similarly for $P_kdF$. Using them, write down the equation of $X_j$ for the vector f\/ield $(\lambda P_{k+1}-P_k)dF$. For example, the equation for $X_1$ is $dX_1/dt = \lambda [X_0, \nabla _kF]-[X_0,\nabla _{k-1}F]$. Summing up the equations of $\lambda ^{n-j}X_j$ proves the desired result.

(iv) Since $\lambda ^{m_in}\psi_{i,k} + \dots +\psi_{i,m_in+k}$ is the Casimir of $\lambda P_{k+1}-P_k$, we have
\begin{gather*}
(\lambda P_{k+1}-P_k) d(\lambda ^{m_in}\psi_{i,k} + \lambda ^{m_in-1}\psi_{i, k+1}+ \dots +\psi_{i,m_in+k})=0.
\end{gather*}
Expanding this yields the f\/irst equality. The second equality is conf\/irmed by a straightforward calculation.

(v) Due to Part (iv), we can assume that $k=n$. Because of the property $[P_{n+1}dF, P_{n+1}dG] = P_{n+1}d\{ G,F\}$ of a Poisson bracket (the left hand side is the Lie bracket for vector f\/ields), it is suf\/f\/icient to show the equality $\{ \psi_{i',j'}, \psi_{i,j}\}_{n+1} = 0$ for $i,i' = 1,\dots ,h$ and $j,j' = 1,\dots ,(m_i+1)n$. When $j=1,\dots ,n$, it is trivial because $\psi_{i,j}$ is the Casimir of $P_{n+1}$. Next, we have
\begin{gather*}
 \{ \lambda ^{m_in}\psi_{i',k}+\lambda ^{m_in-1}\psi_{i',k+1} +\dots + \psi_{i', m_in+k}, \psi_{i,n+j} \}_{n+1} \\
\qquad{} = \langle d(\lambda ^{m_in}\psi_{i',k}+\lambda ^{m_in-1}\psi_{i',k+1} +\dots + \psi_{i', m_in+k}) ,
 P_{n+1}d\psi _{i,n+j} \rangle \\
\qquad{} = \langle d(\lambda ^{m_in}\psi_{i',k}+\lambda ^{m_in-1}\psi_{i',k+1} +\dots + \psi_{i', m_in+k}) ,
 P_{k+1}d\psi _{i,k+j} \rangle \\
\qquad{} = \langle d(\lambda ^{m_in}\psi_{i',k}+\lambda ^{m_in-1}\psi_{i',k+1} +\dots + \psi_{i', m_in+k}) ,
 (\lambda P_{k+1} - P_k)dG_{i,k,j} \rangle \\
\qquad{} = -\langle dG_{i,k,j} , (\lambda P_{k+1}-P_k)d(\lambda ^{m_in}\psi_{i',k}+\lambda ^{m_in-1}\psi_{i',k+1} +\dots + \psi_{i', m_in+k}) \rangle =0.
\end{gather*}
This provides
\begin{gather*}
\{ \psi_{i',k}, \psi_{i, n+j}\}_{n+1} = \dots = \{ \psi_{i',m_in+k}, \psi_{i, n+j}\}_{n+1} = 0,
\end{gather*}
for any $k=1,\dots ,n$ and any $j=1,\dots ,m_in$, which completes the proof.
\end{proof}

\begin{Theorem}\label{theorem2.4}
Suppose that the constant $x_0$ for the definition of $\mathfrak{g}^0_n$ is chosen so that the functions $\{ \psi_{i,j}\}_{i,j}$ are functionally independent. Then, the vector field $P_{k+1}d\psi_{i, k+j}$, which is independent of~$k$, is completely integrable in the Liouville sense for any $i$ and~$j$.
\end{Theorem}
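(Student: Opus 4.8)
The plan is to verify the Liouville--Arnol'd conditions on a generic symplectic leaf of one of the Poisson structures, exhibiting half the leaf dimension's worth of independent, Poisson-commuting integrals. By Proposition~\ref{proposition2.3}(iv) the vector field $P_{k+1}d\psi_{i,k+j}$ is independent of $k$, so it suffices to analyze integrability for the single choice $k=n$, i.e., with respect to the Poisson tensor $P_{n+1}$ on $\mathfrak{g}^0_n$, where the index ranges are cleanest. The three things to pin down are: the dimension of a generic symplectic leaf $S$ of $P_{n+1}$, the number of non-Casimir functions available as integrals, and the fact that these are in involution and independent on $S$.

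First I would count the Casimirs of $P_{n+1}$: taking $k=n$ in Proposition~\ref{proposition2.3}(i), the second range of indices is empty, so the Casimirs are exactly $\psi_{i,j}$ with $i=1,\dots,h$ and $j=1,\dots,n$, a total of $hn$ functions. Since $\dim \mathfrak{g}^0_n = nd$ and, by hypothesis, the whole family $\{\psi_{i,j}\}$ is functionally independent, these $hn$ Casimirs are independent, so a generic leaf has $\dim S = nd - hn = n(d-h)$. The remaining functions $\psi_{i,n+j}$, for $i=1,\dots,h$ and $j=1,\dots,m_in$, are the candidate integrals; their number is $\sum_{i=1}^h m_i n = n\sum_i m_i$, and the classical identity for a simple Lie algebra that the sum of the exponents equals the number of positive roots, $\sum_i m_i = (d-h)/2$, makes this exactly $\frac{1}{2}n(d-h) = \frac{1}{2}\dim S$. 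Proposition~\ref{proposition2.3}(v) gives that these functions pairwise commute for $\{\,,\,\}_{n+1}$, hence commute with the chosen Hamiltonian $\psi_{i,n+j}$ and among themselves. Their restrictions to $S$ stay independent: a relation $\sum_b \mu_b\, d\psi_{i_b,n+j_b}$ vanishing on $T_xS$ would force $\sum_b \mu_b\, d\psi_{i_b,n+j_b}$ into the conormal space $N_x^*S$ spanned by the differentials of the Casimirs, contradicting the joint independence of all $\psi_{i,j}$ unless every $\mu_b=0$. This yields a complete set of $\frac{1}{2}\dim S$ independent commuting integrals on $S$, which is Liouville integrability.

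The main obstacle is the exact dimension matching: the whole argument turns on the coincidence that the number of non-Casimir integrals $\psi_{i,n+j}$ is \emph{precisely} half the dimension of the leaf. This is not a formal output of the bi-Hamiltonian recursion but rests on the Lie-theoretic identity $\sum_i m_i = (d-h)/2$ together with the standing functional-independence hypothesis; these are exactly what promote the abstract commutation relations of Proposition~\ref{proposition2.3}(v) to a genuine Liouville-integrable structure on each symplectic leaf.
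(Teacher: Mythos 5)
Your proof is correct and follows essentially the same route as the paper's: count the $nh$ Casimirs to get $\dim S = n(d-h)$, invoke Proposition~\ref{proposition2.3}(v) for involutivity, and use the identity $\sum_i m_i = \tfrac{1}{2}(d-h)$ to match the number of commuting integrals $\psi_{i,n+j}$ to half the leaf dimension before applying the Liouville theorem. The only differences are cosmetic: you specialize to $k=n$ (the paper's count works uniformly in $k$), and you spell out via the conormal-space argument why joint functional independence of the $\{\psi_{i,j}\}$ gives independence of the integrals restricted to the leaf, a point the paper compresses into ``linearly independent due to the assumption.''
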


\begin{proof} Recall $\dim (\mathfrak{g}) = d$ and $\operatorname{rank}(\mathfrak{g}) = h$. Thus, $\dim (\mathfrak{g}^0_n) = nd$. Since $P_{k+1}$ has $nh$ Casimir functions, the dimension of a symplectic leaf $S$ of $P_{k+1}$ is $n(d-h)$. On the leaf $S$, the vector f\/ields $\{ P_{k+1}d\psi_{i, k+j} \}_{i,j}$ def\/ine $n(d-h)$-dimensional Hamiltonian systems, among which nonzero vector f\/ields are for $i=1,\dots ,h$ and $j=1, \dots ,m_in$.
Further, these nonzero vector f\/ields commute with each other and they are linearly independent due to the assumption. The number of the nonzero vector f\/ields is
\begin{gather*}
\sum^h_{i=1}m_in = \frac{1}{2}(\dim (\mathfrak{g}) - \operatorname{rank}(\mathfrak{g}))n = \frac{1}{2}n (d-h) = \frac{1}{2} \dim (S).
\end{gather*}
Hence, the Liouville theorem shows that the vector f\/ields are integrable.
\end{proof}

In what follows, we suppose the above assumption; the constant~$x_0$ for the def\/inition of $\mathfrak{g}^0_n$ is chosen so that the functions $\{ \psi_{i,j}\}_{i,j}$ are functionally independent. That is, the dif\/ferentials $\{ d\psi_{i,j}\}_{i,j}$ are linearly independent except for f\/inite points.

\subsection{Symplectic reduction}\label{section2.3}

The next purpose is to perform a symplectic reduction \cite{Mag1, Mag2, Mag3, Mag4}.

\begin{Theorem}\label{theorem2.5}
The $h$-dimensional distribution $D$ defined by
\begin{gather*}
D = \operatorname{span}\{ P_kd\psi_{i,k} \, | \, i=1,\dots ,h\}
\end{gather*}
is integrable in the Frobenius sense. The vector fields $P_k d\psi_{i,k}$ are linear for $i=1, \dots ,h$.
\end{Theorem}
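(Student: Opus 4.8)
The plan is to derive both assertions from the structural identities of Proposition~\ref{proposition2.1} and Proposition~\ref{proposition2.3}, after observing that the generators of $D$ are exactly the lowest ($j=1$) members of the commuting hierarchy analysed there. Throughout, $k\in\{1,\dots,n\}$ is fixed and $i$ ranges over $1,\dots,h$.

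First I would prove linearity. Proposition~\ref{proposition2.3}(iv) with $j=1$ gives $G_{i,k,1}=-\psi_{i,k}$ together with the identity
\[
P_k d\psi_{i,k}=P_{k+1}d\psi_{i,k+1}=(\lambda P_{k+1}-P_k)\,dG_{i,k,1},
\]
so by Proposition~\ref{proposition2.3}(iii) the associated vector field is the Lax flow
\[
\frac{d}{dt}X_\lambda=[X_\lambda,\nabla_kG_{i,k,1}]=[C_i,X_\lambda],\qquad C_i:=\nabla_k\psi_{i,k}.
\]
The essential point is that $C_i$ is a \emph{constant} element of $\mathfrak{g}$ on $\mathfrak{g}_n^0$. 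Indeed, since $k\geq 1$ the operator $\nabla_k$ commutes with the restriction $x_0=\mathrm{const}$, so $C_i=(\nabla_k\varphi_{i,k})|_{x_0=\mathrm{const}}$; Proposition~\ref{proposition2.1}(iv) with $j=0$ gives $\nabla_k\varphi_{i,k}=\nabla_0\varphi_{i,0}$, and by Proposition~\ref{proposition2.1}(i) the function $\varphi_{i,0}$ depends on $x_0$ alone. Hence $C_i=(\nabla_0\varphi_{i,0})|_{x_0=\mathrm{const}}$ is independent of the coordinates $(x_1,\dots,x_n)$, so $[C_i,X_\lambda]$ depends linearly on $X_\lambda$. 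Concretely the field acts by $X_j\mapsto[C_i,X_j]$ for $j=1,\dots,n$, and its $\lambda^n$-coefficient $[C_i,X_0]$ vanishes because $C_i=\nabla_0\varphi_{i,0}(x_0)$ is the gradient of a Casimir of $\mathfrak{g}$ and so centralizes $x_0$; this also confirms tangency to $\mathfrak{g}_n^0$.

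For Frobenius integrability it is enough, on the open dense set where the $d\psi_{i,k}$ are independent (the standing assumption following Theorem~\ref{theorem2.4}), to verify that $D$ is involutive. But $P_kd\psi_{i,k}=P_{k+1}d\psi_{i,k+1}$ is precisely the $j=1$ vector field of Proposition~\ref{proposition2.3}(v), so the generators of $D$ commute pairwise, $[P_kd\psi_{i,k},P_kd\psi_{i',k}]=0$; thus $[D,D]\subseteq D$ and $D$ is integrable. The linear description makes this transparent as well: the Lie bracket of $X\mapsto[C_i,X]$ and $X\mapsto[C_{i'},X]$ is $X\mapsto[[C_{i'},C_i],X]$, and the $C_i=\nabla\varphi_{i,0}(x_0)$ all lie in the centralizer of $x_0$, which is abelian for the (regular) choice of $x_0$, forcing $[C_i,C_{i'}]=0$.

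The one step I would handle with care is the identification $C_i=\nabla_k\psi_{i,k}=\mathrm{const}$: one must check that the ``independence of $k$'' in Proposition~\ref{proposition2.1}(iv) is legitimately applied at the boundary value $j=0$, and that freezing $x_0$ on $\mathfrak{g}_n^0$ indeed renders $\nabla_0\varphi_{i,0}$ constant. Granting this, linearity is immediate and the commutativity required for Frobenius is inherited verbatim from Proposition~\ref{proposition2.3}(v), so no genuinely new computation is needed.
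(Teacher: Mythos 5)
Your proof is correct, and for the Frobenius statement it coincides with the paper's: the paper's entire argument there is that integrability follows from Proposition~\ref{proposition2.3}(v), which is exactly your observation that the generators $P_kd\psi_{i,k}=P_{k+1}d\psi_{i,k+1}$ are the $j=1$ members of the commuting family. For linearity the two arguments differ in route but not in substance. The paper argues in one line: $P_kd\psi_{i,k}=P_1d\psi_{i,1}$ by $k$-independence, $\operatorname{wdeg}(\psi_{i,1})=1$ forces $d\psi_{i,1}$ to be a constant covector, and $P_1$ is linear in $(x_1,\dots ,x_n)$, so their contraction is a linear vector field. You instead exhibit the flow in Lax form $\dot{X}_\lambda=[C_i,X_\lambda]$ and prove constancy of $C_i=\nabla_k\psi_{i,k}$ from Proposition~\ref{proposition2.1}(iv) at $j=0$ together with Proposition~\ref{proposition2.1}(i); the application at $j=0$ is legitimate, since the sum in the proof of Proposition~\ref{proposition2.1}(iii) runs from $j=0$. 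This reproduces what the paper records immediately after the theorem, where the same flow is written as $dX_k/dt=[\nabla_1\psi_{i,1},X_k]$, i.e., equation~(\ref{D}), so in effect you have re-derived the paper's follow-up computation and used it as the proof. Your version buys slightly more: the identification $C_i=(\nabla_0\varphi_{i,0})|_{x_0=\mathrm{const}}$ and the vanishing $[C_i,X_0]=0$ make the tangency of the flows to $\mathfrak{g}^0_n$ explicit, which the paper leaves implicit. One caveat: your alternative ``transparent'' involutivity argument assumes $x_0$ regular so that its centralizer is abelian, an assumption the theorem does not make (only functional independence of the $\psi_{i,j}$ is assumed); either drop that remark or upgrade it (gradients of Ad-invariant polynomials at any point lie in the center of the centralizer, hence commute without regularity). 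Since commutativity is already supplied by Proposition~\ref{proposition2.3}(v), this redundancy does not affect the validity of your proof.
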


\begin{proof}The f\/irst statement follows from Proposition~\ref{proposition2.3}(v). Since $P_kd\psi_{i,k}$ is independent of~$k$, we obtain $P_kd\psi_{i,k}=P_1d\psi_{i,1}$. Since $\operatorname{wdeg}(\psi_{i,1}) = 1$, $d\psi_{1,i}$ is a constant, while $P_1$ is linear in $(x_1,\dots ,x_n)$.
\end{proof}

The dif\/ferential equation for $P_kd\psi_{i,k} = P_1d\psi_{i,1}$ is given by
\begin{gather*}
\frac{d}{dt}X_\lambda = [X_\lambda , \nabla _1G_{i,1,1}] = [\nabla _1 \psi_{i,1}, X_\lambda ].
\end{gather*}
Since $\nabla _1 \psi_{i,1}$ is independent of $\lambda $, this is decomposed as
\begin{gather*}
\frac{d}{dt}X_k = [\nabla _1\psi_{i,1}, X_k], \qquad k=1,\dots ,n.
\end{gather*}
In coordinates, it is expressed as
\begin{gather}
\frac{dx_k}{dt} = -A_k \frac{\partial \psi_{i,1}}{\partial x_1}(x_1), \qquad k=1,\dots ,n.\label{D}
\end{gather}
Let us consider the orbit space $\pi \colon \mathfrak{g}^0_n \to \mathfrak{g}^0_n/D$, which is a smooth manifold if points on $\mathfrak{g}^0_n$ at which $\dim (D) < h$ is removed if necessary. The Marsden--Ratiu reduction theorem~\cite{Mag1} states that the orbit space $\mathfrak{g}^0_n/D$ is again a multi-Poisson manifold with compatible Poisson tensors denoted by $\widetilde{P}_1, \dots ,\widetilde{P}_{n+1}$. They are def\/ined by $\widetilde{P}_k = \pi_* P_k\pi^*$. Let $\{ \,\, , \,\, \}_k$ and $\{ \,\, ,\,\,\}_k'$ be Poisson brackets associated with~$P_k$ and~$\widetilde{P}_k$, respectively. For a function $F$ on $\mathfrak{g}^0_n$ which is constant along each integral manifold of~$D$, a function $\widetilde{F}$ on $\mathfrak{g}^0_n/D$ is well-def\/ined through $\widetilde{F} \circ \pi = \pi^* \widetilde{F} = F$. Conversely, for a function $\widetilde{F}$ on $\mathfrak{g}^0_n/D$, we can f\/ind a function $F$ on $\mathfrak{g}^0_n$, which is constant along~$D$, such that $\widetilde{F}\circ \pi = F$. Then, $\{ \,\, ,\,\,\}_k'$ is given by $\{ \widetilde{F}, \widetilde{G}\}_k' \circ \pi = \{ F, G\}_k$.

Because of Proposition~\ref{proposition2.3}(v), $\psi_{i,j}$ is constant along each integral manifold of $D$ and the projection $\widetilde{\psi}_{i,j}$ is well-def\/ined. The projected vector f\/ield is given by $\widetilde{P}_kd\widetilde{\psi}_{i,j} = \pi_* (P_kd\psi_{i,j})$.

It is convenient to realize $\mathfrak{g}^0_n/D$ as a submanifold of $\mathfrak{g}^0_n$. Let $\sigma \colon \mathfrak{g}^0_n/D \to \mathfrak{g}^0_n$ be a smooth section. The image $\sigma (\mathfrak{g}^0_n/D)$ is a~submanifold of $\mathfrak{g}^0_n$ which is dif\/feomorphic to $\mathfrak{g}^0_n/D$.
In Proposition~\ref{proposition2.6}(iii) below, $\mathfrak{g}^0_n/D$ is identif\/ied with a~submanifold in this manner.

\begin{Proposition}\label{proposition2.6}\quad
\begin{enumerate}\itemsep=0pt
\item[$(i)$] Casimir functions of $\widetilde{P}_{k+1}$ are $\widetilde{\psi}_{i,j}$, $i=1,\dots ,h$, for
$j=1,2,\dots ,k+1$ and for $j=m_in+k+1, m_in+k+2,\dots ,(m_i+1)n$.

\item[$(ii)$] Casimir functions of the combination $\lambda \widetilde{P}_{k+1} - \widetilde{P}_k$ are $\widetilde{\psi}_{i,j}$, $i=1,\dots ,h$, for $j=1,2,\dots ,k$ and for $j=m_in+k+1, m_in+k+2,\dots ,(m_i+1)n$, and
\begin{gather*}
\lambda ^{m_in-1}\widetilde{\psi}_{i,k+1} + \lambda ^{m_in-2}\widetilde{\psi}_{i, k+2}
 + \dots + \widetilde{\psi}_{i,m_in+k}, \qquad i=1,\dots ,h.
\end{gather*}
\item[$(iii)$] For a smooth function $\widetilde{F}\colon \mathfrak{g}^0_n/D \to \mathbb{C}$, there exist scalar-valued functions $\beta_1,\dots ,\beta_h \colon \mathfrak{g}^0_n/D$ $\to \mathbb{C}$ such that the equation for the vector field $(\lambda \widetilde{P}_{k+1} - \widetilde{P}_k)d\widetilde{F}$ is expressed in Lax form as
\begin{gather*}
\frac{d}{dt}\widetilde{X}_\lambda = \big[\widetilde{X}_\lambda , \nabla _k\widetilde{F}\big]
 - \sum^h_{i=1}\beta_i \big[\widetilde{X}_\lambda , \nabla _1 \widetilde{\psi}_{i,1}\big]
 = \left[\widetilde{X}_\lambda , \nabla _k\widetilde{F}-\sum^h_{i=1}\beta_i \nabla _1 \widetilde{\psi}_{i,1}\right],
\end{gather*}
where $\widetilde{X}_\lambda = X_\lambda |_{\mathfrak{g}^0_n/D}$ and $\nabla _k\widetilde{F} = (\nabla _kF)|_{\mathfrak{g}^0_n/D}$.

\item[$(iv)$] Define the function $\widetilde{G}_{i,k,j}$ to be
\begin{gather*}
\widetilde{G}_{i,k,j} = -\big( \lambda ^{j-1}\widetilde{\psi}_{i,k} + \lambda ^{j-2} \widetilde{\psi}_{i,k+1} + \dots + \widetilde{\psi}_{i,k+j-1}\big).
\end{gather*}
Then, the equality
\begin{gather*}
\widetilde{P}_{k+1}d\widetilde{\psi}_{i,k+j} = \widetilde{P_k}d\widetilde{\psi}_{i,k+j-1}
 = \big(\lambda \widetilde{P}_{k+1}-\widetilde{P}_k\big)d\widetilde{G}_{i,k,j}
\end{gather*}
holds for $i=1,\dots ,h$, $j=2,\dots ,m_in$ and $k=1,\dots ,n$.
\item[$(v)$] The vector fields $\widetilde{P}_{k+1}d\widetilde{\psi}_{i,k+j}$ for $i=1,\dots ,h$ and $j=2,\dots , m_in$
commute with each other $($note that it is zero when $j\notin \{ 2,\dots ,m_in\})$.
\end{enumerate}
\end{Proposition}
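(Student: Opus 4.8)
The plan is to transport the argument of Proposition~\ref{proposition2.3}(v) to the reduced manifold $\mathfrak{g}^0_n/D$, reducing everything to the commutativity already proved upstairs on $\mathfrak{g}^0_n$. First I would use the $k$-independence from Proposition~\ref{proposition2.6}(iv): since $\widetilde{P}_{k+1}d\widetilde{\psi}_{i,k+j}$ does not depend on $k$, it suffices to treat $k=n$, so the vector fields in question are $\widetilde{P}_{n+1}d\widetilde{\psi}_{i,n+j}$ with $j=2,\dots,m_in$. The vanishing for $j\notin\{2,\dots,m_in\}$ is read off from Proposition~\ref{proposition2.6}(i): for $j\le 1$ the index $n+j$ lies in $\{1,\dots,n+1\}$ and for $j>m_in$ it lies in $\{m_in+n+1,\dots,(m_i+1)n\}$, so in either case $\widetilde{\psi}_{i,n+j}$ is a Casimir of $\widetilde{P}_{n+1}$ and the corresponding vector field is zero.

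For the commutativity I would invoke the Poisson-morphism identity $[\widetilde{P}_{n+1}d\widetilde{F},\widetilde{P}_{n+1}d\widetilde{G}]=\widetilde{P}_{n+1}d\{\widetilde{G},\widetilde{F}\}'_{n+1}$, exactly as in Proposition~\ref{proposition2.3}(v), so that the claim is equivalent (by antisymmetry, the ordering is irrelevant) to the vanishing of the reduced bracket $\{\widetilde{\psi}_{i',n+j'},\widetilde{\psi}_{i,n+j}\}'_{n+1}$. Here the reduction does all the work. The functions $\psi_{i',n+j'}$ and $\psi_{i,n+j}$ are constant along the integral manifolds of $D$ (this is the remark following Proposition~\ref{proposition2.3} that makes the descended functions $\widetilde{\psi}_{i,j}$ well-defined), hence they are lifts of $\widetilde{\psi}_{i',n+j'}$ and $\widetilde{\psi}_{i,n+j}$. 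The Marsden--Ratiu relation $\{\widetilde{F},\widetilde{G}\}'_{n+1}\circ\pi=\{F,G\}_{n+1}$ then gives
\begin{gather*}
\{ \widetilde{\psi}_{i',n+j'}, \widetilde{\psi}_{i,n+j}\}'_{n+1}\circ \pi
= \{ \psi_{i',n+j'}, \psi_{i,n+j}\}_{n+1} = 0,
\end{gather*}
the last equality being precisely Proposition~\ref{proposition2.3}(v) (both indices $n+j',n+j$ lie in $\{n+1,\dots,(m_i+1)n\}$). Since $\pi$ is a surjective submersion, a function whose pullback vanishes is itself zero, so $\{\widetilde{\psi}_{i',n+j'},\widetilde{\psi}_{i,n+j}\}'_{n+1}=0$ and the two vector fields commute.

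The one point requiring care---and what I regard as the main obstacle---is the legitimacy of applying the Marsden--Ratiu relation to these particular functions, which hinges on their $D$-invariance and on the bracket itself descending to the quotient. The $D$-invariance is supplied by Proposition~\ref{proposition2.3}(v), and descent of the bracket is automatic once it is identically zero; granting these, no direct computation on $\mathfrak{g}^0_n/D$ is needed, since the commutativity downstairs is simply inherited through the Poisson map $\pi$. As a self-contained alternative, one could instead mimic the chain-of-brackets computation of Proposition~\ref{proposition2.3}(v) verbatim on $\mathfrak{g}^0_n/D$, using the reduced Casimirs of $\lambda\widetilde{P}_{k+1}-\widetilde{P}_k$ from Proposition~\ref{proposition2.6}(ii) and the identities of Proposition~\ref{proposition2.6}(iv) in place of their unreduced counterparts.
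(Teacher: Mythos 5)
Your argument for part (v) is correct and is essentially the paper's own: the paper disposes of (iv) and (v) with a single sentence saying they are ``projections'' of Proposition~\ref{proposition2.3}(iv) and (v), and your Marsden--Ratiu computation $\{\widetilde{\psi}_{i',n+j'},\widetilde{\psi}_{i,n+j}\}'_{n+1}\circ\pi=\{\psi_{i',n+j'},\psi_{i,n+j}\}_{n+1}=0$ is exactly the right way to make that sentence precise. The problem is that the statement to be proved is the whole of Proposition~\ref{proposition2.6}, and your proposal proves only (v), while invoking (i), (ii) and (iv) as established facts that are never established. This is a genuine gap, not a bureaucratic one, because two of the omitted parts contain content that does not follow from the descent mechanism you set up.

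First, part (i) is not a pure projection statement: by Proposition~\ref{proposition2.3}(i) the Casimirs of $P_{k+1}$ upstairs stop at $j=k$, so $\psi_{i,k+1}$ is \emph{not} a Casimir of $P_{k+1}$; the reduction \emph{creates} the new Casimir $\widetilde{\psi}_{i,k+1}$. The paper's proof of this requires the computation $\{\widetilde{F},\widetilde{\psi}_{i,k+1}\}'_{k+1}=\{F,\psi_{i,k+1}\}_{k+1}=\langle dF,P_{k+1}d\psi_{i,k+1}\rangle=(P_kd\psi_{i,k})(F)=0$, where the middle equality is Proposition~\ref{proposition2.3}(iv) and the final vanishing holds because $P_kd\psi_{i,k}$ spans the distribution $D$ and $F$ is constant along $D$. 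Second, part (iii) --- the appearance of the correction terms $\beta_i$ in the Lax form --- requires identifying $\mathfrak{g}^0_n/D$ with the image of a section, restricting $[X_\lambda,\nabla_kF]$ to it, and projecting along the splitting $T\mathfrak{g}^0_n=T(\mathfrak{g}^0_n/D)\oplus D$, using that $D$ is spanned by vector fields of the form $[X_\lambda,\nabla_1\psi_{i,1}]$; nothing in your proposal produces this. There is also a small slip in your vanishing argument within (v): for $j>m_in$ and $k=n$ the index $n+j$ exceeds $(m_i+1)n$, so $\widetilde{\psi}_{i,n+j}$ is simply absent (the range $\{m_in+n+1,\dots,(m_i+1)n\}$ you cite is empty); moreover, since (iv) gives $k$-independence only for $j\in\{2,\dots,m_in\}$, the vanishing for out-of-range $j$ should be checked for each $k$ directly from (i) rather than reduced to $k=n$.
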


\begin{proof}
(i) $\widetilde{\psi}_{i,j}$ for $j=1,\dots ,k$ and $j=m_in+k+1, \dots , (m_i+1)n$ are Casimir of $\widetilde{P}_{k+1}$ because they are Casimir of $P_{k+1}$. For $\widetilde{\psi}_{i,k+1}$, we have
\begin{gather*}
\big\{ \widetilde{F}, \widetilde{\psi}_{i,k+1} \big\}'_{k+1}
 = \{ F, \psi_{i, k+1}\}_{k+1} = \langle dF , P_{k+1}d\psi_{i,k+1}\rangle = (P_k d\psi_{i,k})(F).
\end{gather*}
The right hand side becomes zero because $F$ is constant along $D$.

(ii) The f\/irst statement (on $\widetilde{\psi}_{i,j}$) is trivial because they are common Casimir of $\widetilde{P}_{k+1}$ and $\widetilde{P}_k$.
The last function $\lambda ^{m_in-1} \widetilde{\psi}_{i,k+1} + \cdots $ is a projection of the function given in Proposition~\ref{proposition2.3}(ii).

The results of (iv) and (v) are projections of those of Proposition~\ref{proposition2.3}(iv) and~(v).

To prove (iii), $\mathfrak{g}^0_n/D$ is identif\/ied with a submanifold of $\mathfrak{g}^0_n$ as above. Put $F = \pi^* \widetilde{F}$. We have to calculate the projection of the vector f\/ield $[X_\lambda , \nabla _kF]$ onto $\mathfrak{g}^0_n/D$ (see Proposition~\ref{proposition2.3}(iii)).
At f\/irst, we restrict the domain to $\mathfrak{g}^0_n/D$ as
\begin{gather*}
[X_\lambda , \nabla _kF]|_{\mathfrak{g}^0_n/D}
 = \big[X_\lambda |_{\mathfrak{g}^0_n/D}, (\nabla_k F)|_{\mathfrak{g}^0_n/D}\big]
 = \big[\widetilde{X}_\lambda , \nabla _k \widetilde{F}\big].
\end{gather*}
Since this is not tangent to $T(\mathfrak{g}^0_n/D)$, we calculate the projection of it according to the decomposition $T\mathfrak{g}^0_n = T(\mathfrak{g}^0_n/D) \oplus D$. Then, (iii) follows from the fact that the distribution $D$ is spanned by the vector f\/ields of the form $[X_\lambda , \nabla _1 \psi _{i,1}]$.
\end{proof}

\subsection{Isospectral deformation to isomonodromic deformation}\label{section2.4}

Now we have $(m_in-1)h$ distinct vector f\/ields on $\mathfrak{g}^0_n/D$
\begin{alignat*}{3}
&j=2\colon \quad && \widetilde{P}_1 d\widetilde{\psi}_{i,2} = \dots =
 \widetilde{P}_{k+1}d\widetilde{\psi}_{i,k+2}=\dots =\widetilde{P}_{n+1}d\widetilde{\psi}_{i,n+2}, & \\
&\cdots && \cdots\cdots\cdots\cdots\cdots\cdots\cdots\cdots\cdots\cdots\cdots\cdots\cdots\cdots\cdots\cdots &\\
&j=j\colon\quad && \widetilde{P}_1 d\widetilde{\psi}_{i,j} = \dots =
 \widetilde{P}_{k+1}d\widetilde{\psi}_{i,k+j}=\dots =\widetilde{P}_{n+1}d\widetilde{\psi}_{i,n+j}, & \\
&\cdots && \cdots\cdots\cdots\cdots\cdots\cdots\cdots\cdots\cdots\cdots\cdots\cdots\cdots\cdots\cdots\cdots &\\
&j=m_in\colon \quad && \widetilde{P}_1 d\widetilde{\psi}_{i,m_in} = \dots =
 \widetilde{P}_{k+1}d\widetilde{\psi}_{i,k+m_in}=\dots =\widetilde{P}_{n+1}d\widetilde{\psi}_{i,(m_i+1)n}.
\end{alignat*}
They are $(nd-h)$-dimensional integrable systems. For f\/ixed $k$, a symplectic leaf of the Poisson structure $\widetilde{P}_k$ is given by a level surface of Casimir functions as
\begin{gather*}
S_k = \big\{ \widetilde{\psi}_{i,j} = \alpha _{i,j}\, (\text{const}) \, | \, i=1,\dots ,h,\, j=1,\dots ,k,\, j=m_in+k,\dots ,(m_i+1)n\big\}.
\end{gather*}
Restricted on the symplectic leaf, the vector f\/ields become $(nd-nh-2h)$-dimensional completely integrable Hamiltonian systems of the form
\begin{alignat}{3}
& \widetilde{P}_{k+1}d\widetilde{\psi}_{i,k+j}\colon \quad & & \frac{d}{dt}\widetilde{X}_\lambda
 = \big[\widetilde{X}_\lambda , A_\lambda \big] \qquad \text{on $S_k$},& \label{2-7} \\
& && A_\lambda :=\nabla _k\widetilde{G}_{i,k,j}-\sum^h_{i=1}\beta_i \nabla _1 \widetilde{\psi}_{i,1}. & \nonumber
\end{alignat}
Both of $\widetilde{X}_\lambda $ and $A_\lambda $ depend on parameters $\{ \alpha _{i,j}\}_{i,j}$ which def\/ine the symplectic leaf. Thus, we write $\widetilde{X}_\lambda $ as $\widetilde{X}_\lambda (t, \alpha )$, where $\alpha $ denotes the collection of parameters~$\alpha _{i,j}$.

Now suppose that there exists a parameter $\alpha _{i',j'}$ such that the following condition holds
\begin{gather}
\frac{\partial \widetilde{X}_\lambda }{\partial \alpha _{i',j'}}(t, \alpha ) = \lambda ^l \frac{\partial A_\lambda }{\partial \lambda}\label{2-8}
\end{gather}
for some integer $l$. Equation~(\ref{2-7}) is put together with equation~(\ref{2-8}) to yield
\begin{gather*}
\frac{\partial \widetilde{X}_\lambda}{\partial t}(t,\alpha ) + \frac{\partial \widetilde{X}_\lambda }{\partial \alpha _{i',j'}}(t, \alpha )
 = \big[\widetilde{X}_\lambda , A_\lambda \big]+\lambda ^l \frac{\partial A_\lambda }{\partial \lambda }.
\end{gather*}
Def\/ine the Lax matrix $L_\lambda $ by
\begin{gather*}
L_\lambda := \frac{1}{\lambda ^l} \widetilde{X}_\lambda (t, \alpha )|_{\alpha _{i',j'} = t},
\end{gather*}
where the parameter $\alpha _{i',j'}$ satisfying the condition (\ref{2-8}) is replaced by~$t$. Then, the above equation is rewritten as
\begin{gather}
\frac{dL_\lambda}{dt} = [L_\lambda , A_\lambda ]+\frac{\partial A_\lambda }{\partial \lambda },
\label{2-9}
\end{gather}
which is known as the isomonodromic deformation equation. It is known that a system written as the isomonodromic deformation equation enjoys the Painlev\'{e} property. The function $\widetilde{\psi}_{i,k+j}$ restricted on~$S_k$ will be a Hamiltonian function of the Painlev\'{e} equation after replacing $\alpha _{i',j'} \mapsto t$ and changing to Darboux's coordinates if necessary.

For ABCD-type simple Lie algebras, the dimensions of Painlev\'{e} systems obtained in this manner are summarized in Table~\ref{table3}. In particular, the dimension is $2$ when $\mathfrak{g} = \mathfrak{sl}_2$, $n=2$, and it is $4$ when $\mathfrak{g} = \mathfrak{sl}_2$, $n=3$ or $\mathfrak{g} = \mathfrak{so}_5$, $n=1$. From the next sections, we will demonstrate our method for these cases. In particular, the Hamiltonian functions shown in Section~\ref{section1} will be obtained.

\begin{table}[h]\centering\caption{The dimensions of Painlev\'{e} systems.}\label{table3}\vspace{1mm}
\begin{tabular}{|c|c|c|c|c|c|} \hline
& & dimension & $h=1$ & $h=2$ & $h=3$\\ \hline \hline
$\text{A}_h (h\geq 1)$ & $\mathfrak{sl}_{h+1}$ & $nh^2+nh-2h$ & $2n-2$ & $6n-4$ & $12n-6$ \\ \hline
$\text{B}_h (h\geq 2)$ & $\mathfrak{so}_{2h+1}$& $2nh^2-2h$ & $-$ & $8n-4$ & $18n-6$\\ \hline
$\text{C}_h (h\geq 3)$ & $\mathfrak{sp}_{2h}$ & $2nh^2-2h$ & $-$ & $-$ & $18n-6$ \\ \hline
$\text{D}_h (h\geq 4)$ & $\mathfrak{so}_{2h}$ & $2nh^2-2nh-2h$ & $-$ & $-$ & $-$ \\ \hline
\end{tabular}
\end{table}

\section[2-dimensional Painlev\'{e} equations: $\mathfrak{g} = \mathfrak{sl}_2$, $n=2$]{2-dimensional Painlev\'{e} equations: $\boldsymbol{\mathfrak{g} = \mathfrak{sl}_2}$, $\boldsymbol{n=2}$}\label{section3}

In this case, a general element of $\mathfrak{g}_n$ is written by
\begin{gather*}
X_\lambda = \lambda ^2 \left(
\begin{matrix}
u_0 & v_0 \\
w_0 & -u_0
\end{matrix}
\right) + \lambda \left(
\begin{matrix}
u_1 & v_1 \\
w_1 & -u_1
\end{matrix}
\right) + \left(
\begin{matrix}
u_2 & v_2 \\
w_2 & -u_2
\end{matrix}
\right).
\end{gather*}
The Painlev\'{e} equation obtained by our method depends on a choice of $x_0 = (u_0, v_0, w_0)$. We consider the following two cases.
\begin{gather*}
\text{(I)}\ \left(
\begin{matrix}
u_0 & v_0 \\
w_0 & -u_0
\end{matrix}
\right) = \left(
\begin{matrix}
1 & 0 \\
0 & -1
\end{matrix}
\right), \qquad \text{(II)}\ \left(
\begin{matrix}
u_0 & v_0 \\
w_0 & -u_0
\end{matrix}
\right) = \left(
\begin{matrix}
0 & 0 \\
1 & 0
\end{matrix}
\right).
\end{gather*}
From the former case, we will obtain the second and fourth Painlev\'{e} equations $\text{P}_\text{II}$, $\text{P}_\text{IV}$, and from the latter one, we will obtain the f\/irst and second Painlev\'{e} equations $\text{P}_\text{I}$, $\text{P}_\text{II}$.

\subsection{Case (I)}\label{section3.1}

\looseness=-1 In this case, the functions $\psi _{i,j} = \psi _j$ (since $h=\operatorname{rank}(\mathfrak{g}) = 1$, we omit the subscript~$i$) are given by
\begin{alignat*}{3}
& \psi _1 = 2u_1 , \qquad && \psi _2 = 2u_2 + u_1^2 + v_1w_1, & \\
& \psi _3 = 2u_1u_2 + v_2w_1 + v_1w_2, \qquad && \psi _4 = u_2^2+v_2w_2.&
\end{alignat*}
(see Example~\ref{example2.2}). The dif\/ferential equation~(\ref{D}) def\/ining the distribution $D$ is $u_j' = 0$, $v_j' = 2v_j$, $w_j' = -2w_j$ for $j=1,2$. This is solved as a function of $w_1$ as
\begin{gather*}
u_1 = U_1, \qquad u_2 = U_2,\qquad v_1 = V_2/w_1,\qquad v_2 = V_3/w_1,\qquad w_2 = W_1 w_1,
\end{gather*}
where $U_1$, $U_2$, $V_2$, $V_3$, $W_1$ are integral constants (initial values at $w_1=1$), for which the subscripts are given so that they are consistent with the weighted degrees (for example, since $\operatorname{wdeg}(v_2w_1) = 2+1 = 3$, the weighted degree of $V_3$ is three). This relation def\/ines a coordinate transformation
\begin{gather*}
(u_1, v_1, w_1, u_2, v_2, w_2) \mapsto (U_1, V_2, w_1, U_2, V_3, W_1).
\end{gather*}
In the new coordinates, integral manifolds of the distribution $D$ are straight lines along $w_1$-axis. In particular, the subset $\{ w_1 = 1\} \subset \mathfrak{g}^0_n$ gives the realization of the orbit space $\mathfrak{g}^0_n/D$ as a~submanifold and $(U_1, V_2, U_2, V_3, W_1)$ provides a global coordinate system of $\mathfrak{g}^0_n/D$.

At this stage, we have on $\mathfrak{g}^0_n/D$
\begin{alignat*}{3}
& \widetilde{\psi}_1 = 2U_1, \qquad && \widetilde{\psi}_2 = 2U_2+U_1^2+V_2, & \\
& \widetilde{\psi}_3 = 2U_1U_2 + V_3 + V_2W_1, \qquad && \widetilde{\psi}_4 = U_2^2 + V_3W_1,&
\end{alignat*}
and three Poisson structures $\widetilde{P}_1$~(Casimirs are $\widetilde{\psi}_1$, $\widetilde{\psi}_3$, $\widetilde{\psi}_4$),
$\widetilde{P}_2$ (Casimirs are $\widetilde{\psi}_1$, $\widetilde{\psi}_2$, $\widetilde{\psi}_4$), $\widetilde{P}_3$~(Casimirs are $\widetilde{\psi}_1$, $\widetilde{\psi}_2$, $\widetilde{\psi}_3$), and vector f\/ields $\widetilde{P}_3d\widetilde{\psi}_4 = \widetilde{P}_2 d\widetilde{\psi}_3 = \widetilde{P}_1d\widetilde{\psi}_2$ which are expressed as the Lax equation $d\widetilde{X}_\lambda /dt = [A_\lambda, \widetilde{X}_\lambda ]$, where
\begin{gather*}
\widetilde{X}_\lambda = \lambda ^2 \left(
\begin{matrix}
1 & 0 \\
0 & -1
\end{matrix}
\right) + \lambda \left(
\begin{matrix}
U_1 & V_2 \\
1 & -U_1
\end{matrix}
\right) + \left(
\begin{matrix}
U_2 & V_3 \\
W_1 & -U_2
\end{matrix}
\right), \\
A_\lambda = \lambda \left(
\begin{matrix}
1 & 0 \\
0 & -1
\end{matrix}
\right) + \left(
\begin{matrix}
U_1 & V_2 \\
1 & -U_1
\end{matrix}
\right) - \left(
\begin{matrix}
W_1 & 0 \\
0 & -W_1
\end{matrix}
\right).
\end{gather*}

The next purpose is to restrict the vector f\/ields on a symplectic leaf. We will consider $\widetilde{P}_3d\widetilde{\psi}_4$ and $\widetilde{P}_2 d\widetilde{\psi}_3$ separately ($\widetilde{P}_1d\widetilde{\psi}_2$ will not be considered because there are no parameters satisfying~(\ref{2-8})).

(i) Consider the vector f\/ield $\widetilde{P}_3 d\widetilde{\psi}_4$. For the Poisson tensor $\widetilde{P}_3$, a symplectic leaf is def\/ined by the level surface $\{ \widetilde{\psi}_j = \text{const},\, j=1,2,3\}$. In order for the condition
\begin{gather}
\frac{\partial \widetilde{X}_\lambda }{\partial \alpha } = \frac{\partial A_\lambda }{\partial \lambda } = \left(
\begin{matrix}
1 & 0 \\
0 & -1
\end{matrix}
\right) \label{3-1}
\end{gather}
to be satisf\/ied, we f\/ind that $U_2$ in $\widetilde{X}_\lambda $ has to include a parameter $\alpha $ which will be replaced by $t$ later.
For this purpose, we take the symplectic leaf
\begin{gather*}
S = \big\{ 2U_1 = 0,\, 2U_2+U_1^2+V_2 = 2\alpha _2 ,\, 2U_1U_2 + V_3 + V_2W_1 = \alpha _3 \big\}.
\end{gather*}
Hence, we put $U_1 = 0$, $U_2 = \alpha _2 - V_2/2$, $V_3 = \alpha _3 - V_2W_1$, and $(V_2, W_1)$ gives a global coordinate system for the symplectic leaf. Then, it turns out that $\widetilde{X}_\lambda $ satisf\/ies the condition (\ref{3-1}) with $\alpha =\alpha _2$ on the symplectic leaf. Finally, by replacing $\alpha _2$ by $t$, we obtain the isomonodromic deformation equation (\ref{2-9}) with
\begin{gather*}
\begin{split}
& L_\lambda = \lambda ^2 \left(
\begin{matrix}
1 & 0 \\
0 & -1
\end{matrix}
\right) + \lambda \left(
\begin{matrix}
0 & V_2 \\
1 & 0
\end{matrix}
\right) + \left(
\begin{matrix}
t-V_2/2 & \alpha _3 - V_2W_1 \\
W_1 & -(t-V_2/2)
\end{matrix}
\right), \\
& A_\lambda = \lambda \left(
\begin{matrix}
1 & 0 \\
0 & -1
\end{matrix}
\right) + \left(
\begin{matrix}
-W_1 & V_2 \\
1 & W_1
\end{matrix}
\right).
\end{split}
\end{gather*}
The Poisson tensor $\widetilde{P}_3$ on the symplectic leaf with coordinates $(V_2,W_1)$ is given by
\begin{gather*}
\widetilde{P}_3 = \left(
\begin{matrix}
0 & 2 \\
-2 & 0
\end{matrix}
\right).
\end{gather*}
To change to Darboux's coordinates, we put $V_2 = -2p_2$, $W_1 = q_1$. Then, $\widetilde{P}_3$ becomes the canonical symplectic matrix. In the coordinates $(q_1, p_2)$, the isomonodromic deformation equation is a Hamiltonian system. The Hamiltonian function $\widetilde{\psi}_4$ for the vector f\/ield $\widetilde{P}_3d\widetilde{\psi}_4$ is written as
\begin{gather*}
\widetilde{\psi}_4 = U_2^2 + V_3W_1 = (t - V_2/2)^2 + (\alpha _3 - V_2W_1)W_1 \\
\hphantom{\widetilde{\psi}_4}{} = p_2^2 + 2q_1^2p_2 + 2tp_2 + \alpha _3 q_1 + t^2.
\end{gather*}
This is reduced to the Hamiltonian function (\ref{2dimP2}) of the second Painlev\'{e} equation by a certain coordinate change and the isomonodromic deformation equation~(\ref{2-9}) is equivalent to the second Painlev\'{e} equation.

(ii) Consider the vector f\/ield $\widetilde{P}_2 d\widetilde{\psi}_3$. For the Poisson tensor $\widetilde{P}_2$, a symplectic leaf is def\/ined by the level surface $\{ \widetilde{\psi}_j = \text{const},\, j=1,2,4\}$.
In order for the condition
\begin{gather}
\frac{\partial \widetilde{X}_\lambda }{\partial \alpha } = \lambda \frac{\partial A_\lambda }{\partial \lambda } = \lambda \left(
\begin{matrix}
1 & 0 \\
0 & -1
\end{matrix}
\right)\label{3-2}
\end{gather}
to be satisf\/ied, we f\/ind that $U_1$ in $\widetilde{X}_\lambda $ has to include a parameter $\alpha $ which will be replaced by $t$ later, and the other components of $\widetilde{X}_\lambda $ cannot include $\alpha $. For this purpose, we take the symplectic leaf
\begin{gather*}
S = \big\{ 2U_1 = 2 \alpha _1,\, 2U_2+U_1^2+V_2 = \alpha _2+\alpha _1^2 ,\, U_2^2+V_3W_1 = \alpha _4\big \}.
\end{gather*}
This relation is rewritten as
\begin{gather*}
U_1 = \alpha _1,\qquad V_2 = \alpha _2 - 2U_2, \qquad V_3 = \big(\alpha _4-U_2^2\big)/W_1.
\end{gather*}
By substituting them, $\widetilde{X}_\lambda $ satisf\/ies the condition (\ref{3-2}) with $\alpha =\alpha _1$. Finally, by replacing $\alpha _1$ by~$t$, we obtain the isomonodromic deformation equation~(\ref{2-9}).

The Poisson tensor $\widetilde{P}_2$ on the symplectic leaf with coordinates $(U_2,W_1)$ is given by
\begin{gather*}
\widetilde{P}_2 = \left(
\begin{matrix}
0 & W_1 \\
-W_1 & 0
\end{matrix}
\right).
\end{gather*}
For Darboux's coordinates, we put $U_2 = p_1q_1 - \beta_2$ and $W_1 = q_1$, where $\beta_2$ is an arbitrary constant. Then, $\widetilde{P}_2$ is transformed to the canonical symplectic matrix. In the coordinates $(q_1, p_1)$, the isomonodromic deformation equation is a~Hamiltonian system. The Hamiltonian function $\widetilde{\psi}_3$ for the vector f\/ield $\widetilde{P}_2d\widetilde{\psi}_3$ is written as
\begin{gather*}
\widetilde{\psi}_3 = 2U_1U_2 + V_3 + V_2W_1 = 2t U_2 + \big(\alpha _4-U_2^2\big)/W_1 + (\alpha _2 - 2U_2)W_1 \\
\hphantom{\widetilde{\psi}_3}{} = -p_1^2 q_1 - 2p_1q_1^2 + 2t p_1q_1 + 2\beta_2 p_1 + (\alpha _2 + 2\beta_2) q_1 - 2\beta_2 t + \frac{\alpha _4-\beta_2^2}{q_1}.
\end{gather*}
We choose the free parameter $\beta_2$ to be $\alpha _4 = \beta_2^2$ so that the Hamiltonian becomes a polynomial.
This is the Hamiltonian function (\ref{2dimP4}) of the fourth Painlev\'{e} equation up to some scaling.
The isomonodromic deformation equation (\ref{2-9}) is equivalent to the fourth Painlev\'{e} equation, where
\begin{gather*}
L_\lambda = \lambda \left(
\begin{matrix}
1 & 0 \\
0 & -1
\end{matrix}
\right) + \left(
\begin{matrix}
t & \alpha _2+2\beta_2 - 2p_1q_1 \\
1 & -t
\end{matrix}
\right) + \frac{1}{\lambda }\left(
\begin{matrix}
p_1q_1-\beta_2 & -p_1^2q_1+2\beta_2 p_1 \\
q_1 & -(p_1q_1-\beta_2)
\end{matrix}
\right), \\
A_\lambda = \lambda \left(
\begin{matrix}
1 & 0 \\
0 & -1
\end{matrix}
\right) + \left(
\begin{matrix}
t-q_1 & \alpha _2+2\beta_2 - 2p_1q_1 \\
1 & -(t-q_1)
\end{matrix}
\right).
\end{gather*}

\subsection{Case (II)}\label{section3.2}

In this case, the functions $\psi _{i,j} = \psi _j$ are given by
\begin{alignat*}{3}
& \psi _1 = v_1, \qquad && \psi _2 = u_1^2 + v_2 + v_1w_1, & \\
& \psi _3 = 2u_1u_2 + v_2w_1 + v_1w_2, \qquad && \psi _4 = u_2^2+v_2w_2.&
\end{alignat*}
The dif\/ferential equation (\ref{D}) def\/ining the distribution $D$ is $u_j' = -v_j$, $v_j' = 0$, $w_j' = 2u_j$ for $j=1,2$. We can assume without loss of generality that $v_1 = 1$ by a suitable scaling of variables (indeed, $v_1$ is a common Casimir of $P_1$, $P_2$, $P_3$). Thus, the equations are solved as a function of~$u_1$ as
\begin{gather*}
v_1 = 1,\qquad v_2 = V_2,\qquad u_2 = V_2u_1 + U_3,\\ w_1 = -u_1^2 + W_2,\qquad w_2 = -V_2u_1^2 - 2U_3u_1 + W_4,
\end{gather*}
where $V_2$, $U_3$, $W_2$, $W_4$ are integral constants (initial values at $u_1=0$). This relation def\/ines a~coordinate transformation
\begin{gather*}
(u_1, w_1, u_2, v_2, w_2) \mapsto (u_1, W_2, U_3, V_2, W_4).
\end{gather*}
In the new coordinates, integral manifolds of the distribution $D$ are straight lines along $u_1$-axis. In particular, the subset $\{ u_1 = 0\} \subset \mathfrak{g}^0_n$ gives the realization of the orbit space $\mathfrak{g}^0_n/D$ and $(W_2, U_3, V_2, W_4)$ provides a global coordinate system of $\mathfrak{g}^0_n/D$ restricted to $v_1 = 1$.

On $\mathfrak{g}^0_n/D$, we have functions
\begin{gather*}
\widetilde{\psi}_1 = 1, \qquad \widetilde{\psi}_2 = V_2 + W_2 , \qquad
\widetilde{\psi}_3 = V_2W_2 + W_4, \qquad \widetilde{\psi}_4 = U_3^2 + V_2W_4,
\end{gather*}
and three Poisson structures $\widetilde{P}_1$ (Casimirs are $\widetilde{\psi}_1$, $\widetilde{\psi}_3$, $\widetilde{\psi}_4$), $\widetilde{P}_2$~(Casimirs are $\widetilde{\psi}_1$, $\widetilde{\psi}_2$, $\widetilde{\psi}_4$), $\widetilde{P}_3$~(Casimirs are $\widetilde{\psi}_1$, $\widetilde{\psi}_2$, $\widetilde{\psi}_3$), and vector f\/ields $\widetilde{P}_3d\widetilde{\psi}_4 = \widetilde{P}_2 d\widetilde{\psi}_3 = \widetilde{P}_1d\widetilde{\psi}_2$
which are expressed as the Lax equation $d\widetilde{X}_\lambda /dt = [A_\lambda, \widetilde{X}_\lambda ]$, where
\begin{gather*}
\widetilde{X}_\lambda = \lambda ^2 \left(
\begin{matrix}
0 & 0 \\
1 & 0
\end{matrix}
\right) + \lambda \left(
\begin{matrix}
0 & 1 \\
W_2 & 0
\end{matrix}
\right) + \left(
\begin{matrix}
U_3 & V_2 \\
W_4 & -U_3
\end{matrix}
\right), \\
A_\lambda = \lambda \left(
\begin{matrix}
0 & 0 \\
1 & 0
\end{matrix}
\right) + \left(
\begin{matrix}
0 & 1 \\
W_2 & 0
\end{matrix}
\right) - \left(
\begin{matrix}
0 & 0 \\
V_2 & 0
\end{matrix}
\right).
\end{gather*}
The next purpose is to restrict the vector f\/ields on a symplectic leaf.

(i) Consider the vector f\/ield $\widetilde{P}_3 d\widetilde{\psi}_4$. For the Poisson tensor $\widetilde{P}_3$, a symplectic leaf is def\/ined by the level surface $\{ \widetilde{\psi}_j = \text{const},\, j=1,2,3\}$. For the condition
\begin{gather}
\frac{\partial \widetilde{X}_\lambda }{\partial \alpha } = \frac{\partial A_\lambda }{\partial \lambda } = \left(
\begin{matrix}
0 & 0 \\
1 & 0
\end{matrix}
\right),
\label{3-3}
\end{gather}
we \looseness=-1 f\/ind that $W_4$ in $\widetilde{X}_\lambda $ has to include a parameter $\alpha $. To this end, we take the symplectic leaf as
\begin{gather*}
S = \big\{ \widetilde{\psi}_1 = 1,\, \widetilde{\psi}_2 = V_2 + W_2 = 0 ,\, \widetilde{\psi}_3 = V_2W_2 + W_4 = \alpha _4 \big\}.
\end{gather*}
Hence, we put $V_2 = -W_2$, $W_4 = \alpha _4 + W_2^2$, so that $(W_2, U_3)$ gives a global coordinate system on the leaf. Then, $\widetilde{X}_\lambda $ satisf\/ies the condition~(\ref{3-3}) with $\alpha =\alpha _4$. Finally, by replacing~$\alpha _4$ by~$t$, we obtain the isomonodromic deformation equation~(\ref{2-9}) with
\begin{gather*}
\begin{split}
& L_\lambda = \lambda ^2 \left(
\begin{matrix}
0 & 0 \\
1 & 0
\end{matrix}
\right) + \lambda \left(
\begin{matrix}
0 & 1 \\
W_2 & 0
\end{matrix}
\right) + \left(
\begin{matrix}
U_3 & -W_2 \\
t + W_2^2 & -U_3
\end{matrix}
\right), \\
& A_\lambda = \lambda \left(
\begin{matrix}
0 & 0 \\
1 & 0
\end{matrix}
\right) + \left(
\begin{matrix}
0 & 1 \\
2W_2 & 0
\end{matrix}
\right) .
\end{split}
\end{gather*}
The Poisson tensor $\widetilde{P}_3$ on the symplectic leaf is given by
\begin{gather*}
\widetilde{P}_3 = \left(
\begin{matrix}
0 & 1 \\
-1 & 0
\end{matrix}
\right),
\end{gather*}
which is already in canonical form. On the symplectic leaf, the function $\widetilde{\psi}_4$ is written as
\begin{gather*}
\widetilde{\psi}_4 = U_3^2 + V_2W_4 = U_3^2 - W_2^3 - tW_2.
\end{gather*}
This is the Hamiltonian function (\ref{2dimP1}) of the f\/irst Painlev\'{e} equation (up to some scaling) and the isomonodromic deformation equation (\ref{2-9}) coincides with the f\/irst Painlev\'{e} equation.

(ii) For the vector f\/ield $\widetilde{P}_2 d\widetilde{\psi}_3$, we again obtain the second Painlev\'{e} equation and the detailed calculation is omitted.

\section[4-dimensional Painlev\'{e} equations: $\mathfrak{g} = \mathfrak{sl}_2$, $n=3$]{4-dimensional Painlev\'{e} equations: $\boldsymbol{\mathfrak{g} = \mathfrak{sl}_2}$, $\boldsymbol{n=3}$}\label{section4}

In this case, a general element of $\mathfrak{g}_n$ is written as
\begin{gather*}
X_\lambda = \lambda ^3 \left(
\begin{matrix}
u_0 & v_0 \\
w_0 & -u_0
\end{matrix}
\right) + \lambda^2 \left(
\begin{matrix}
u_1 & v_1 \\
w_1 & -u_1
\end{matrix}
\right) + \lambda \left(
\begin{matrix}
u_2 & v_2 \\
w_2 & -u_2
\end{matrix}
\right) + \left(
\begin{matrix}
u_3 & v_3 \\
w_3 & -u_3
\end{matrix}
\right).
\end{gather*}
For the def\/inition of $\mathfrak{g}^0_n$, we again consider the following two cases.
\begin{gather*}
\text{(I)}\ \left(
\begin{matrix}
u_0 & v_0 \\
w_0 & -u_0
\end{matrix}
\right) = \left(
\begin{matrix}
1 & 0 \\
0 & -1
\end{matrix}
\right), \qquad \text{(II)}\ \left(
\begin{matrix}
u_0 & v_0 \\
w_0 & -u_0
\end{matrix}
\right) = \left(
\begin{matrix}
0 & 0 \\
1 & 0
\end{matrix}
\right).
\end{gather*}
From the former case, we will obtain Hamiltonian functions (\ref{4dimP22}), (\ref{4dimP4}), (\ref{1120}), and from the latter one, we will obtain (\ref{4dimP1}), (\ref{4dimP21}), (\ref{-1412}).

\subsection{Case (I)}\label{section4.1}

\looseness=-1 In this case, the functions $\psi _{i,j} = \psi _j$ (since $h=\operatorname{rank}(\mathfrak{g}) = 1$, we omit the subscript $i$) are given by
\begin{gather*}
\psi _1 = 2u_1, \\
\psi _2 = 2u_2 + u_1^2 + v_1w_1, \\
\psi _3 = 2u_1u_2 + 2u_3 + v_2w_1 + v_1w_2,\\
\psi _4 = u_2^2 + 2u_1u_3 + v_3w_1 + v_2w_2 + v_1w_3, \\
\psi_5 = 2u_2u_3+v_3w_2+v_2w_3, \\
\psi_6 = u_3^2 + v_3w_3.
\end{gather*}
The dif\/ferential equation (\ref{D}) def\/ining the distribution $D$ is $u_j' = 0$, $ v_j' = 2v_j$, $ w_j' = -2w_j$ for $j=1,2,3$. This is solved as a function of $w_1$ as
\begin{gather*}
 u_1 = U_1, \qquad u_2 = U_2,\qquad u_3= U_3, \\
 v_1 = V_2/w_1,\qquad v_2 = V_3/w_1,\qquad v_3 = V_4/w_1,\qquad w_2 = W_1 w_1,\qquad w_3 = W_2w_1,
\end{gather*}
where $U_1$, $U_2$, $ U_3$, $V_2$, $V_3$, $V_4$, $W_1$, $W_2$ are integral constants (initial values at $w_1 = 1$). This relation def\/ines a coordinate transformation
\begin{gather*}
(u_1, v_1, w_1, u_2, v_2, w_2, u_3, v_3, w_3) \mapsto (U_1, V_2, w_1, U_2, V_3, W_1, U_3, V_4, W_2).
\end{gather*}
In the new coordinates, integral manifolds of the distribution $D$ are straight lines along $w_1$-axis. In particular, the subset $\{ w_1 = 1\} \subset \mathfrak{g}^0_n$ gives the realization of the orbit space $\mathfrak{g}^0_n/D$ as a~submanifold and $(U_1, V_2, U_2, V_3, W_1, U_3, V_4, W_2)$ provides a global coordinate system of $\mathfrak{g}^0_n/D$.

At this stage, we have on $\mathfrak{g}^0_n/D$
\begin{gather*}
\widetilde{\psi}_1 = 2U_1, \\
\widetilde{\psi}_2 = 2U_2+U_1^2+V_2, \\
\widetilde{\psi}_3 = 2U_1U_2 + 2U_3 + V_3 + V_2W_1, \\
\widetilde{\psi}_4 = 2U_1U_3 + U_2^2 + V_4 + V_2W_2+ V_3W_1, \\
\widetilde{\psi}_5 = 2U_2U_3 + V_4W_1 + V_3W_2, \\
\widetilde{\psi}_6 = U_3^2 + V_4W_2,
\end{gather*}
and two vector f\/ields
\begin{gather}
 \widetilde{P}_2d\widetilde{\psi}_3 = \widetilde{P}_3d\widetilde{\psi}_4 = \widetilde{P}_4d\widetilde{\psi}_5,\label{vec1} \\
 \widetilde{P}_2d\widetilde{\psi}_4 = \widetilde{P}_3d\widetilde{\psi}_5 = \widetilde{P}_4d\widetilde{\psi}_6.\label{vec2}
\end{gather}
The dif\/ferential equations of these vector f\/ields are expressed in Lax form as
\begin{gather}
\frac{\partial \widetilde{X}_\lambda }{\partial t_1} = \big[A_1, \widetilde{X}_\lambda \big], \qquad
\frac{\partial \widetilde{X}_\lambda }{\partial t_2} = \big[A_2, \widetilde{X}_\lambda \big],\label{Lax4}
\end{gather}
respectively, where
\begin{gather*}
\widetilde{X}_\lambda = \lambda ^3 \left(
\begin{matrix}
1 & 0 \\
0 & -1
\end{matrix}
\right) + \lambda^2 \left(
\begin{matrix}
U_1 & V_2 \\
1 & -U_1
\end{matrix}
\right) + \lambda \left(
\begin{matrix}
U_2 & V_3 \\
W_1 & -U_2
\end{matrix}
\right) + \left(
\begin{matrix}
U_3 & V_4 \\
W_2 & -U_3
\end{matrix}
\right), \\
A_1 = \lambda \left(
\begin{matrix}
1 & 0 \\
0 & -1
\end{matrix}
\right) + \left(
\begin{matrix}
U_1 & V_2 \\
1 & -U_1
\end{matrix}
\right) - \left(
\begin{matrix}
W_1 & 0 \\
0 & -W_1
\end{matrix}
\right), \\
A_2 = \lambda^2 \left(
\begin{matrix}
1 & 0 \\
0 & -1
\end{matrix}
\right) + \lambda \left(
\begin{matrix}
U_1 & V_2 \\
1 & -U_1
\end{matrix}
\right) + \left(
\begin{matrix}
U_2 & V_3 \\
W_1 & -U_2
\end{matrix}
\right)
 - \left(
\begin{matrix}
W_2 & 0 \\
0 & -W_2
\end{matrix}
\right).
\end{gather*}

The next purpose is to restrict the vector f\/ields on a symplectic leaf.

(i) Consider the pair of vector f\/ields $\widetilde{P}_4 d\widetilde{\psi}_5$ and $\widetilde{P}_4 d\widetilde{\psi}_6$. For the Poisson tensor $\widetilde{P}_4$, a symplectic leaf is def\/ined by the level surface
$\{ \widetilde{\psi}_j = \text{const},\, j=1,2,3,4\}$.
In order for the two conditions
\begin{gather}
\frac{\partial \widetilde{X}_\lambda }{\partial \alpha } = \frac{\partial A_1 }{\partial \lambda } = \left(
\begin{matrix}
1 & 0 \\
0 & -1
\end{matrix}
\right), \qquad
\frac{\partial \widetilde{X}_\lambda }{\partial \alpha' } = \frac{\partial A_2 }{\partial \lambda } = 2\lambda \left(
\begin{matrix}
1 & 0 \\
0 & -1
\end{matrix}
\right) + \left(
\begin{matrix}
U_1 & V_2 \\
1 & -U_1
\end{matrix}
\right)
\label{4-1}
\end{gather}
to be satisf\/ied, we f\/ind that $U_3$ in $\widetilde{X}_\lambda $ has to include a parameter~$\alpha $, which will be replaced by~$t_1$ later,
and $U_2$ and $W_2$ have to include a parameter~$\alpha' $, which will be replaced by~$t_2$ later. For this purpose, we take the symplectic leaf
\begin{gather*}
S = \big\{ \widetilde{\psi}_1 = 0,\, \widetilde{\psi}_2 = 4\alpha _2 ,\, \widetilde{\psi}_3 = 2\alpha _3,\,
\widetilde{\psi}_4 = \alpha _4 + 4\alpha _2^2 \big\}.
\end{gather*}
Further, we change the coordinate as $W_2 = \widetilde{W}_2 + \alpha _2$ because $W_2$ should include a parameter. Then, the above relation for $S$ is rearranged as
\begin{gather*}
 U_1 = 0,\qquad U_2 = 2\alpha _2-\frac{1}{2}V_2,\qquad U_3 = \alpha _3-\frac{1}{2}V_3-\frac{1}{2}V_2W_1, \\
 V_4 = \alpha _4 + \alpha _2V_2 - \frac{1}{4}V_2^2 - V_2\widetilde{W}_2 - V_3W_1.
\end{gather*}
Substituting them into $\widetilde{X}_\lambda$, $A_1$ and $A_2$, we can verify the condition~(\ref{4-1}) with $\alpha = \alpha _3$ and $\alpha ' = \alpha _2$. By replacing $\alpha _3$, $\alpha _2$ by $t_1$, $t_2$, respectively, we obtain the isomonodromic deformation equations
\begin{gather}
\frac{\partial L_\lambda }{\partial t_1} = [A_1, L_\lambda ]+ \frac{\partial A_1}{\partial \lambda }, \qquad
\frac{\partial L_\lambda }{\partial t_2} = [A_2, L_\lambda ]+ \frac{\partial A_2}{\partial \lambda },
\label{PDE}
\end{gather}
which are equations of $(V_2,V_3,W_1,\widetilde{W}_2)$ with two independent variables $t_1$, $t_2$, where
\begin{gather*}
L_\lambda = \lambda ^3 \left(
\begin{matrix}
1 & 0 \\
0 & -1
\end{matrix}
\right) + \lambda^2 \left(
\begin{matrix}
0 & V_2 \\
1 & 0
\end{matrix}
\right) + \lambda \left(
\begin{matrix}
2t_2-\frac{1}{2}V_2 & V_3 \\
W_1 & -\big(2t_2-\frac{1}{2}V_2\big)
\end{matrix}
\right) \\
\hphantom{L_\lambda =}{} + \left(
\begin{matrix}
t_1-\frac{1}{2}V_3-\frac{1}{2}V_2W_1 & \alpha _4 + t_2V_2 - \frac{1}{4}V_2^2 - V_2\widetilde{W}_2 - V_3W_1 \\
\widetilde{W}_2+t_2 & -\big(t_1-\frac{1}{2}V_3-\frac{1}{2}V_2W_1\big)
\end{matrix}
\right), \\
A_1 = \lambda \left(
\begin{matrix}
1 & 0 \\
0 & -1
\end{matrix}
\right) + \left(
\begin{matrix}
-W_1 & V_2 \\
1 & W_1
\end{matrix}
\right), \\
A_2 = \lambda^2 \left(
\begin{matrix}
1 & 0 \\
0 & -1
\end{matrix}
\right) + \lambda \left(
\begin{matrix}
0 & V_2 \\
1 & 0
\end{matrix}
\right) +\left(
\begin{matrix}
t_2-\frac{1}{2}V_2-\widetilde{W}_2 & V_3 \\
W_1 & -\big(t_2-\frac{1}{2}V_2-\widetilde{W}_2\big)
\end{matrix}
\right).
\end{gather*}
The Poisson tensor $\widetilde{P}_4$ on the symplectic leaf written in the coordinates $(V_2,V_3,W_1,\widetilde{W}_2)$ is given by
\begin{gather*}
\widetilde{P}_4 = \left(
\begin{matrix}
0 &0 &0 &2 \\
0&0 &2 &0 \\
0&-2 &0 &0 \\
-2&0 &0 &0 \\
\end{matrix}
\right).
\end{gather*}
The above two isomonodromic deformation equations can be written as Hamiltonian systems. The Hamiltonian functions of these equations are obtained by deleting $U_1$, $U_2$, $U_3$, $V_4$ from~$\widetilde{\psi}_5$ and~$\widetilde{\psi}_6$ by using the above relations, and changing to Darboux's coordinates by a scaling so that the above~$\widetilde{P}_4$ is transformed to the canonical symplectic matrix. In this manner, we obtain Hamiltonian functions~(\ref{4dimP22}) of~$(\text{P}_\text{II-2})_2$ given in Section~\ref{section1}.

(ii) Consider the pair of vector f\/ields $\widetilde{P}_3 d\widetilde{\psi}_4$ and $\widetilde{P}_3 d\widetilde{\psi}_5$. For the Poisson tensor $\widetilde{P}_3$, a symplectic leaf is def\/ined by the level surface $\{ \widetilde{\psi}_j = \text{const},\, j=1,2,3,6\}$. For the two conditions
\begin{gather}
\frac{\partial \widetilde{X}_\lambda }{\partial \alpha } = \lambda \frac{\partial A_1 }{\partial \lambda } = \lambda \left(
\begin{matrix}
1 & 0 \\
0 & -1
\end{matrix}
\right), \qquad
\frac{\partial \widetilde{X}_\lambda }{\partial \alpha' } = \lambda \frac{\partial A_2 }{\partial \lambda } = 2\lambda^2 \left(
\begin{matrix}
1 & 0 \\
0 & -1
\end{matrix}
\right) + \lambda \left(
\begin{matrix}
U_1 & V_2 \\
1 & -U_1
\end{matrix}
\right), \label{4-2}
\end{gather}
we f\/ind that~$U_2$ in $\widetilde{X}_\lambda $ has to include a parameter~$\alpha $, and~$U_1$ and~$W_1$ have to include a parame\-ter~$\alpha' $. For this purpose, we take the symplectic leaf
\begin{gather*}
S = \big\{ \widetilde{\psi}_1 = 4\alpha _1,\, \widetilde{\psi}_2 = 2\alpha _2+6\alpha _1^2 ,\, \widetilde{\psi}_3 = \alpha _3+4\alpha _1(\alpha _2+\alpha _1^2),\, \widetilde{\psi}_6 = \alpha _6 \big\}.
\end{gather*}
Further, we change the coordinate as $W_1 = \widetilde{W}_1 + \alpha _1$. Then, the above relations for $S$ yield
\begin{gather*}
 U_1 = 2\alpha _1,\qquad U_2 = \alpha _2+\alpha _1^2-\frac{1}{2}V_2,\qquad V_3 = \alpha _3- 2U_3-V_2\widetilde{W}_1 + \alpha _1V_2, \\
 V_4 = \big(\alpha _6-U_3^2\big)/W_2.
\end{gather*}
Substituting them into $\widetilde{X}_\lambda$, $A_1$ and $A_2$, we can verify the condition~(\ref{4-2}) with $\alpha = \alpha _2$ and $\alpha ' = \alpha _1$. By replacing $\alpha _2$, $\alpha _1$ by~$t_1$,~$t_2$, respectively, we obtain the isomonodromic deformation equations~(\ref{PDE}), which are equations of $(V_2,\widetilde{W}_1 ,U_3, W_2)$ with two independent variables~$t_1$, $t_2$. The Poisson tensor~$\widetilde{P}_3$ on the symplectic leaf expressed in the coordinates $(V_2,\widetilde{W}_1 ,U_3, W_2)$ is given by
\begin{gather*}
\widetilde{P}_3 = \left(
\begin{matrix}
0 &2 &0 &0 \\
-2&0 &0 &0 \\
0&0 &0 &W_2 \\
0&0 &-W_2 &0 \\
\end{matrix}
\right).
\end{gather*}
To change to Darboux's coordinates, put
\begin{gather*}
\big(V_2,\widetilde{W}_1 ,U_3, W_2\big) = (2p_1, q_1, q_2p_2- \beta_3, p_2),
\end{gather*}
where $\beta_3$ is an arbitrary parameter. In the new coordinates, we obtain
\begin{gather}
\widetilde{P}_3 = \left(
\begin{matrix}
0 &1 &0 &0 \\
-1&0 &0 &0 \\
0&0 &0 &1 \\
0&0 &-1 &0 \\
\end{matrix}
\right).\label{P}
\end{gather}
Therefore, the two isomonodromic deformation equations (\ref{PDE}) are Hamiltonian systems in this coordinate system. The Hamiltonian functions are obtained by deleting $(U_1,U_2,V_3,V_4)$ from~$\widetilde{\psi}_4$ and~$\widetilde{\psi}_5$ and by changing to the coordinates $(p_1, q_1, p_2, q_2)$. It is easy to verify that if we set $\beta_3^2=\alpha _6$, then two functions become polynomials, which give Hamiltonian functions~(\ref{4dimP4}) of~$(\text{P}_\text{IV})_2$ given in Section~\ref{section1}.

(iii) Consider the pair of vector f\/ields $\widetilde{P}_2 d\widetilde{\psi}_3$ and $\widetilde{P}_2 d\widetilde{\psi}_4$. For the Poisson tensor $\widetilde{P}_2$, a~symplectic leaf is def\/ined by the level surface $\{ \widetilde{\psi}_j = \text{const},\, j=1,2,5,6\}$. In this case, we cannot f\/ind an integer $l$ and a parameter $\alpha '$ such that the condition
\begin{gather*}
\frac{\partial \widetilde{X}_\lambda }{\partial \alpha'} = \lambda^l \frac{\partial A_2 }{\partial \lambda }
\end{gather*}
holds. Hence, we impose only one condition
\begin{gather}
\frac{\partial \widetilde{X}_\lambda }{\partial \alpha} = \lambda^2 \frac{\partial A_1 }{\partial \lambda } = \lambda^2 \left(
\begin{matrix}
1 & 0 \\
0 & -1
\end{matrix}
\right).
\label{4-3}
\end{gather}
For it, $U_1$ in $\widetilde{X}_\lambda $ has to include a parameter $\alpha $. To this end, take the symplectic leaf
\begin{gather*}
S = \big\{ \widetilde{\psi}_1 = 2\alpha _1,\, \widetilde{\psi}_2 = 2\alpha _2+\alpha _1^2 ,\, \widetilde{\psi}_5 = \alpha _5,\, \widetilde{\psi}_6 = \alpha _6 \big\}.
\end{gather*}
This is rearranged as
\begin{gather*}
U_1 = \alpha _1,\qquad V_2 = 2\alpha _2 - 2U_2,\qquad V_3 = (\alpha _5 - 2U_2U_3 - V_4W_1)/W_2, \\
 V_4 = \big(\alpha _6 - U_3^2\big)/W_2.
\end{gather*}
Substituting them into $\widetilde{X}_\lambda$ and $A_1$, we can verify the condition (\ref{4-3}) with $\alpha = \alpha _1$. By repla\-cing~$\alpha _1$ by~$t$, we obtain the isomonodromic deformation equation
\begin{gather*}
\frac{\partial L_\lambda }{\partial t} = [A_1, L_\lambda ]+ \frac{\partial A_1}{\partial \lambda } .
\end{gather*}
The Poisson tensor $\widetilde{P}_2$ on the symplectic leaf with coordinates $(U_2, W_1 ,U_3, W_2)$ is given by
\begin{gather*}
\widetilde{P}_2 = \left(
\begin{matrix}
0 &W_1 &0 & W_2 \\
-W_1&0 &-W_2 &0 \\
0&W_2 &0 &0 \\
-W_2&0 &0 &0 \\
\end{matrix}
\right).
\end{gather*}
To change to Darboux's coordinates, put
\begin{gather*}
(U_2, W_1, U_3, W_2) = (p_1q_1 + p_2q_2 - \beta_2, q_1, p_1q_2-\beta_3, q_2),
\end{gather*}
where $\beta_2$ and $\beta_3$ are arbitrary parameters. In the new coordinates $(q_1, p_1,q_2,p_2)$, $\widetilde{P}_2$ is reduced to the same form as~(\ref{P}). The isomonodromic deformation equation is a Hamiltonian system whose Hamiltonian function is $\widetilde{\psi}_3$ written in this coordinate system. It is easy to verify that if we set $\alpha _6 = \beta_3^2$ and $\alpha _5 = 2\beta_2\beta_3$, then $\widetilde{\psi}_3$ written in the coordinates $(q_1, p_1,q_2,p_2)$ becomes a~polynomial. In this manner, the Hamiltonian function~(\ref{1120}) given in Section~\ref{section1} is obtained.

\subsection{Case (II)}\label{section4.2}

In this case, the functions $\psi _{i,j} = \psi _j$ are given by
\begin{gather*}
\psi _1 = v_1, \\
\psi _2 = u_1^2 + v_2 + v_1w_1, \\
\psi _3 = 2u_1u_2 + v_3 + v_2w_1 + v_1w_2, \\
\psi _4 = u_2^2 + 2u_1u_3 + v_3w_1 + v_2w_2 + v_1w_3, \\
\psi_5 = 2u_2u_3 + v_3w_2 + v_2w_3, \\
\psi_6 = u_3^2 + v_3w_3.
\end{gather*}
The dif\/ferential equation (\ref{D}) def\/ining the distribution~$D$ is $u_j' = -v_j$, $v_j' = 0$, $w_j' = 2u_j$ for $j=1,2,3$. We can assume without loss of generality that $v_1 = 1$ by a suitable scaling of variables. These equations are solved with respect to~$u_1$ as
\begin{gather*}
v_1 = 1,\qquad w_1 = -u_1^2 + W_2,\\
u_2 = V_2 u_1 + U_3,\qquad v_2 = V_2, \qquad w_2 = -V_2u_1^2-2U_3u_1+W_4, \\
u_3 = V_4 u_1 + U_5,\qquad v_3 = V_4,\qquad w_3 = -V_4u_1^2 - 2U_5u_1 + W_6,
\end{gather*}
where $W_2$, $U_3$, $V_2$, $W_4$, $U_5$, $V_4$, $W_6$ are integral constants (initial values at $u_1 = 0$). This relation def\/ines a coordinate transformation
\begin{gather*}
(u_1, w_1, u_2, v_2, w_2, u_3, v_3, w_3) \mapsto (u_1, W_2, U_3, V_2, W_4, U_5, V_4, W_6).
\end{gather*}
In the new coordinates, integral manifolds of the distribution $D$ are straight lines along $u_1$-axis. In particular, the subset $\{ u_1 = 0\} \subset \mathfrak{g}^0_n$ gives the realization of the orbit space $\mathfrak{g}^0_n/D$ as a submanifold and $( W_2, U_3, V_2, W_4, U_5, V_4, W_6)$ provides a global coordinate system of $\mathfrak{g}^0_n/D$ restricted to $v_1 = 1$.

On $\mathfrak{g}^0_n/D$, we have functions
\begin{gather*}
\widetilde{\psi}_1 = 1, \\
\widetilde{\psi}_2 = V_2+W_2, \\
\widetilde{\psi}_3 = V_4+W_4+V_2W_2, \\
\widetilde{\psi}_4 = U_3^2 + W_6 + V_2W_4 + V_4W_2, \\
\widetilde{\psi}_5 = 2U_3U_5 + V_2W_6 + V_4W_4, \\
\widetilde{\psi}_6 = U_5^2 + V_4W_6,
\end{gather*}
and two vector f\/ields (\ref{vec1}), (\ref{vec2}) expressed in Lax form (\ref{Lax4}) with
\begin{gather*}
\widetilde{X}_\lambda = \lambda ^3 \left(
\begin{matrix}
0 & 0 \\
1 & 0
\end{matrix}
\right) + \lambda^2 \left(
\begin{matrix}
0 & 1 \\
W_2 & 0
\end{matrix}
\right) + \lambda \left(
\begin{matrix}
U_3 & V_2 \\
W_4 & -U_3
\end{matrix}
\right) + \left(
\begin{matrix}
U_5 & V_4 \\
W_6 & -U_5
\end{matrix}
\right), \\
A_1 = \lambda \left(
\begin{matrix}
0 & 0 \\
1 & 0
\end{matrix}
\right) + \left(
\begin{matrix}
0 & 1 \\
W_2 & 0
\end{matrix}
\right) - \left(
\begin{matrix}
0 & 0 \\
V_2 & 0
\end{matrix}
\right), \\
A_2 = \lambda^2 \left(
\begin{matrix}
0 & 0 \\
1 & 0
\end{matrix}
\right) + \lambda \left(
\begin{matrix}
0 & 1 \\
W_2 & 0
\end{matrix}
\right) + \left(
\begin{matrix}
U_3 & V_2 \\
W_4 & -U_3
\end{matrix}
\right)
 - \left(
\begin{matrix}
0 & 0 \\
V_4 & 0
\end{matrix}
\right).
\end{gather*}

The next purpose is to restrict the vector f\/ields on a symplectic leaf.

(i) Consider the pair of vector f\/ields $\widetilde{P}_4 d\widetilde{\psi}_5$ and $\widetilde{P}_4 d\widetilde{\psi}_6$. For the Poisson tensor $\widetilde{P}_4$, a symplectic leaf is def\/ined by the level surface $\{ \widetilde{\psi}_j = \text{const},\, j=1,2,3,4\}$. In order for the two conditions
\begin{gather}
\frac{\partial \widetilde{X}_\lambda }{\partial \alpha } = \frac{\partial A_1 }{\partial \lambda } = \left(
\begin{matrix}
0 & 0 \\
1 & 0
\end{matrix}
\right), \qquad
\frac{\partial \widetilde{X}_\lambda }{\partial \alpha' } = \frac{\partial A_2 }{\partial \lambda } = 2\lambda \left(
\begin{matrix}
0 & 0 \\
1 & 0
\end{matrix}
\right) + \left(
\begin{matrix}
0 & 1 \\
W_2 & 0
\end{matrix}
\right)\label{4-7}
\end{gather}
to be satisf\/ied, we f\/ind that $W_6$ in $\widetilde{X}_\lambda $ has to include a parameter~$\alpha $, which will be replaced by~$t_1$ later, and $W_4$ and $V_4$ have to include a parameter $\alpha' $, which will be replaced by~$t_2$ later. For this purpose, we take the symplectic leaf
\begin{gather*}
S = \big\{ \widetilde{\psi}_1 = 1,\, \widetilde{\psi}_2 = 0 ,\, \widetilde{\psi}_3 = 3\alpha _4,\,\widetilde{\psi}_4 = \alpha _6\big\}.
\end{gather*}
Further, we change the coordinate as $W_4 = \widetilde{W}_4 + 2\alpha _4$. Then, the above relation for $S$ is rearranged as
\begin{gather*}
 V_2 = -W_2,\qquad V_4 = \alpha _4-\widetilde{W}_4+W_2^2,\qquad W_6 = \alpha _6 - U_3^2 + 2W_2\widetilde{W}_4 - W_2^3 + \alpha _4W_2.
\end{gather*}
Substituting them into $\widetilde{X}_\lambda$, $A_1$ and $A_2$, we can verify the condition~(\ref{4-7}) with $\alpha = \alpha _6$ and $\alpha ' = \alpha _4$. By replacing $\alpha _6$,~$\alpha _4$ by~$t_1$,~$t_2$, respectively, we obtain the isomonodromic deformation equations~(\ref{PDE}), which are equations of $(W_2,U_3,\widetilde{W}_4, U_5)$ with two independent variables~$t_1$,~$t_2$. The Poisson tensor $\widetilde{P}_4$ on the symplectic leaf written in this coordinate system is given by
\begin{gather*}
\widetilde{P}_4 = \left(
\begin{matrix}
0 &0 &0 &1 \\
0&0 & -1 & 0 \\
0&1 &0 & W_2 \\
-1&0 &-W_2 &0 \\
\end{matrix}
\right).
\end{gather*}
To change to Darboux's coordinates, put
\begin{gather*}
\big(W_2,U_3,\widetilde{W}_4, U_5\big) = (q_1, p_2, q_2, p_1 + p_2q_1).
\end{gather*}
Then, $\widetilde{P}_4$ is transformed to the canonical symplectic matrix. In the coordinates $(q_1, p_1, q_2, p_2)$, the above two isomonodromic deformation equations are Hamiltonian systems. The Hamiltonian functions are obtained by deleting $V_2$, $V_4$, $W_6$ from $\widetilde{\psi}_5$ and $\widetilde{\psi}_6$ by using the above relations, and changing to Darboux's coordinates. In this manner, we obtain Hamiltonian functions~(\ref{4dimP1}) of~$(\text{P}_\text{I})_2$ given in Section~\ref{section1}.

(ii) Consider the pair of vector f\/ields $\widetilde{P}_3 d\widetilde{\psi}_4$ and $\widetilde{P}_3 d\widetilde{\psi}_5$. For the Poisson tensor $\widetilde{P}_3$, a symplectic leaf is def\/ined by the level surface $\{ \widetilde{\psi}_j = \text{const},\, j=1,2,3,6\}$.
For the two conditions
\begin{gather}
\frac{\partial \widetilde{X}_\lambda }{\partial \alpha } = \lambda \frac{\partial A_1 }{\partial \lambda } = \lambda \left(
\begin{matrix}
0 & 0 \\
1 & 0
\end{matrix}
\right), \qquad
\frac{\partial \widetilde{X}_\lambda }{\partial \alpha' } = \lambda \frac{\partial A_2 }{\partial \lambda } = 2\lambda^2 \left(
\begin{matrix}
0 & 0 \\
1 & 0
\end{matrix}
\right) + \lambda \left(
\begin{matrix}
0 & 1 \\
W_2 & 0
\end{matrix}
\right), \label{4-8}
\end{gather}
we f\/ind that $W_4$ in $\widetilde{X}_\lambda $ has to include a parameter~$\alpha $, and $W_2$ and $V_2$ have to include a parame\-ter~$\alpha' $. For this purpose, we take the symplectic leaf
\begin{gather*}
S = \big\{ \widetilde{\psi}_1 = 1,\, \widetilde{\psi}_2 = 3\alpha _2
 ,\, \widetilde{\psi}_3 = \alpha _4 + 3\alpha _2^2,\,
 \widetilde{\psi}_6 = \alpha _{10}\big \}.
\end{gather*}
Further, \looseness=-1 we change the coordinate as $V_2 = \widetilde{V}_2 + \alpha _2$. Then, the above relation for $S$ is rearranged as
\begin{gather*}
W_2 = 2 \alpha _2-\widetilde{V}_2,\qquad W_4 = \alpha _4 + \alpha _2^2 - \alpha _2\widetilde{V}_2 - V_4 + V_2^2,
\qquad W_6 = \big(\alpha _{10}- U_5^2\big)/V_4.
\end{gather*}
Substituting them into $\widetilde{X}_\lambda , A_1$ and $A_2$, we can verify the condition~(\ref{4-8}) with $\alpha = \alpha _4$ and $\alpha ' = \alpha _2$. By replacing~$\alpha _4$,~$\alpha _2$ by~$t_1$,~$t_2$, respectively, we obtain the isomonodromic deformation equations~(\ref{PDE}). The Poisson tensor $\widetilde{P}_3$ on the symplectic leaf written in the coordinates $(\widetilde{V}_2, U_3, V_4, U_5)$ is given by
\begin{gather*}
\widetilde{P}_3 = \left(
\begin{matrix}
0 &1 &0 &0 \\
-1&0 &0 &0 \\
0&0 &0 &-V_4 \\
0&0 &V_4 &0 \\
\end{matrix}
\right).
\end{gather*}
To change to Darboux's coordinates, put
\begin{gather*}
\big(\widetilde{V}_2, U_3, V_4, U_5\big) = (p_2, q_2, p_1, q_1p_1 - \alpha _5),
\end{gather*}
where $\alpha _5$ is an arbitrary parameter. Then, $\widetilde{P}_3$ is transformed to the canonical symplectic mat\-rix. In the coordinates $(q_1, p_1, q_2, p_2)$, the above two isomonodromic deformation equations are Hamiltonian systems. The Hamiltonian functions are obtained by deleting $W_2$, $W_4$, $W_6$ from~$\widetilde{\psi}_4$ and~$\widetilde{\psi}_5$ by using the above relations, and by changing to Darboux's coordinates. It is easy to verify that if we set $\alpha _{10}=\alpha _5^2$, then two functions become polynomials. In this manner, we obtain Hamiltonian functions (\ref{4dimP21}) of $(\text{P}_\text{II-1})_2$ given in Section~\ref{section1}.

(iii) Consider the vector f\/ields $\widetilde{P}_2 d\widetilde{\psi}_3$ and $\widetilde{P}_2 d\widetilde{\psi}_4$. For the Poisson tensor $\widetilde{P}_2$, a symplectic leaf is def\/ined by the level surface $\{ \widetilde{\psi}_j = \text{const},\, j=1,2,5,6\}$. In this case, we cannot f\/ind an integer $l$ and a parameter $\alpha '$ such that the condition
\begin{gather*}
\frac{\partial \widetilde{X}_\lambda }{\partial \alpha'} = \lambda^l \frac{\partial A_2 }{\partial \lambda }
\end{gather*}
holds. Hence, we impose only one condition
\begin{gather}
\frac{\partial \widetilde{X}_\lambda }{\partial \alpha} = \lambda^2 \frac{\partial A_1 }{\partial \lambda } = \lambda^2 \left(
\begin{matrix}
0 & 0 \\
1 & 0
\end{matrix}
\right).\label{4-9}
\end{gather}
For it, $W_2$ in $\widetilde{X}_\lambda $ has to include a parameter~$\alpha $. To this end, take the symplectic leaf
\begin{gather*}
S = \big\{ \widetilde{\psi}_1 = 1,\, \widetilde{\psi}_2 = \alpha _2 ,\, \widetilde{\psi}_5 = \alpha _{8},\, \widetilde{\psi}_6 = \alpha _{10} \big\}.
\end{gather*}
This is rearranged as
\begin{gather*}
W_2 = \alpha _2 - V_2,\qquad W_4 = (\alpha _8 - 2U_3U_5 - V_2W_6)/V_4,\qquad W_6 = \big(\alpha _{10} - U_5^2\big)/V_4.
\end{gather*}
Substituting them into $\widetilde{X}_\lambda$ and $A_1$, we can verify the condition~(\ref{4-9}) with $\alpha = \alpha _2$. By re\-pla\-cing~$\alpha _2$ by~$t$, we obtain the isomonodromic deformation equation
\begin{gather*}
\frac{\partial L_\lambda }{\partial t} = [A_1, L_\lambda ]+ \frac{\partial A_1}{\partial \lambda } .
\end{gather*}
The Poisson tensor $\widetilde{P}_2$ on the symplectic leaf with coordinates $(U_3, V_2 ,U_5, V_4)$ is given by
\begin{gather*}
\widetilde{P}_2 = \left(
\begin{matrix}
0 &-V_2 &0 & -V_4 \\
V_2 &0 & V_4 &0 \\
0& -V_4 &0 &0 \\
V_4&0 &0 &0 \\
\end{matrix}
\right).
\end{gather*}
To change to Darboux's coordinates, put
\begin{gather*}
(U_3, V_2 ,U_5, V_4) = (p_1q_1 + p_2q_2 - \beta_3,\, p_2,\, p_1q_2-\beta_5,\, p_1),
\end{gather*}
where $\beta_3$ and $\beta_5$ are arbitrary parameters. Then, $\widetilde{P}_2$ is transformed to the canonical symplectic matrix. In the coordinates $(q_1, p_1, q_2, p_2)$, the above isomonodromic deformation equation is a~Hamiltonian system. The Hamiltonian function is obtained by deleting $W_2$, $W_4$, $W_6$ from $\widetilde{\psi}_3$ by using the above relations, and by changing to Darboux's coordinates. It is easy to verify that if we set $\alpha _{10}=\beta_5^2$ and $\alpha _8 = 2\beta_3 \beta_5$, then $\widetilde{\psi}_3$ written in the coordinates $(q_1, p_1,q_2,p_2)$ becomes a~polynomial. This procedure yields the Hamiltonian function (\ref{-1412}) given in Section~\ref{section1}.

\section[4-dimensional Painlev\'{e} equations: $\mathfrak{g} = \mathfrak{so}_5$, $n=1$]{4-dimensional Painlev\'{e} equations: $\boldsymbol{\mathfrak{g} = \mathfrak{so}_5}$, $\boldsymbol{n=1}$}\label{section5}

According to \cite{DS}, we use the following representation for the Lie algebra $\mathfrak{g} \simeq \mathfrak{so}_5$ of type B$_2$
\begin{gather*}
X_i = \left(
\begin{matrix}
p_i & q_i & r_i & s_i & 0 \\
t_i & u_i & v_i & 0 & s_i \\
w_i & x_i & 0 & v_i & -r_i \\
y_i & 0 & x_i & -u_i & q_i \\
0 & y_i & -w_i & t_i & -p_i
\end{matrix}
\right).
\end{gather*}
Consider the Lie algebra $\mathfrak{g}_1 = \{ X_\lambda = \lambda X_0 + X_1 \, | \, X_i\in \mathfrak{g} \simeq \mathfrak{so}_5\}$. For the def\/inition of $\mathfrak{g}^0_1$, we only consider the following case
\begin{gather*}
X_0 = \left(
\begin{matrix}
0 & 0 & 0 & 0 & 0 \\
1 & 0 & 0 & 0 & 0 \\
0 & 1 & 0 & 0 & 0 \\
0 & 0 & 1 & 0 & 0 \\
0 & 0 & 0 & 1 & 0 \\
\end{matrix}
\right),
\end{gather*} (i.e., $x_0=t_0=1$ and zeros otherwise). The purpose in this section is to derive the Hamiltonian~(\ref{4dimCos}) for Cosgrove's equation. The other choice of $X_0$ may yield dif\/ferent Painlev\'{e} systems. Note that $n=1$, $\operatorname{rank}(\mathfrak{g}) = h=2$, $m_1 = 1$ and $m_2 = 3$.
We have the following functions
\begin{gather*}
\psi_{1,1} = -2q_1 - 2v_1, \\
\psi_{1,2} = -2q_1t_1 - u_1^2 - 2v_1x_1 - 2s_1y_1, \\
\psi_{2,1} = -2s_1, \\
\psi_{2,2} = q_1^2 - 2s_1t_1 + 2q_1v_1 - 4s_1x_1, \\
\psi_{2,3} = 2q_1^2t_1 + 2q_1t_1v_1 + 2s_1u_1w_1 - 4s_1t_1x_1 + 2q_1v_1x_1
 -2s_1x_1^2 - 2q_1s_1y_1 + 2s_1v_1y_1, \\
\psi_{2,4} = q_1^2t_1^2 - 2q_1u_1v_1w_1 + 2q_1s_1w_1^2 + 2q_1t_1v_1x_1 + 2s_1u_1w_1x_1 \\
\hphantom{\psi_{2,4} =}{} -2s_1t_1x_1^2 - 2q_1s_1t_1y_1 - 2q_1v_1^2y_1 + 2s_1v_1x_1y_1 + s_1^2y_1^2,
\end{gather*}
which are coef\/f\/icients of the characteristic polynomial $\det (\mu - X_\lambda )$.

We solve the dif\/ferential equations for the two dimensional distribution~$D$ as functions of $(p_1, r_1)$ with the initial condition $(q_1, s_1, t_1, u_1, v_1, w_1, x_1, y_1) =(Q,S,T,U,V,W,X,Y)$ at $(p_1, r_1) = (0,0)$. The expressions of solutions are too long and omitted here. These solutions def\/ine a coordinate transformation
\begin{gather*}
(p_1, r_1, q_1, s_1, t_1, u_1, v_1, w_1, x_1, y_1) \mapsto (p_1, r_1, Q,S,T,U,V,W,X,Y).
\end{gather*}
In the new coordinates, integral manifolds of the distribution~$D$ are plains which are parallel to the $(p_1, r_1)$-plain. In particular, the subset $\{ p_1 = r_1 = 0\} \subset \mathfrak{g}^0_1$ gives the realization of the orbit space~$\mathfrak{g}^0_1/D$ as a~submanifold and $(Q,S,T,U,V,W,X,Y)$ provides a~global coordinate system of~$\mathfrak{g}^0_1/D$.

At this stage, we have on $\mathfrak{g}^0_1/D$
\begin{gather*}
\widetilde{\psi}_{1,1} = -2Q - 2V, \\
\widetilde{\psi}_{1,2} = -2QT - U^2 - 2VX - 2SY, \\
\widetilde{\psi}_{2,1} = -2S, \\
\widetilde{\psi}_{2,2} = Q^2 - 2ST + 2QV- 4SX, \\
\widetilde{\psi}_{2,3} = 2Q^2T + 2QTV + 2SUW - 4STX+ 2QVX -2SX^2 - 2QSY + 2SVY,
\end{gather*}
and the vector f\/ield $\widetilde{P}_2d\widetilde{\psi}_{2,3}$, whose Casimir functions are $\widetilde{\psi}_{1,1}$, $\widetilde{\psi}_{1,2}$, $\widetilde{\psi}_{2,1}$ and $\widetilde{\psi}_{2,2}$. The corresponding dif\/ferential equation is expressed in Lax form as $d\widetilde{X}_\lambda /dt = [A_\lambda , \widetilde{X}_\lambda ]$, where
\begin{gather*}
X_\lambda = \lambda \left(
\begin{matrix}
0 & 0 & 0 & 0 & 0 \\
1 & 0 & 0 & 0 & 0 \\
0 & 1 & 0 & 0 & 0 \\
0 & 0 & 1 & 0 & 0 \\
0 & 0 & 0 & 1 & 0 \\
\end{matrix}
\right) + \left(
\begin{matrix}
0 & Q & 0 & S & 0 \\
T & U & V & 0 & S \\
W & X & 0 & V & 0 \\
Y & 0 & X & -U & Q \\
0 & Y & -W & T & 0 \\
\end{matrix}
\right), \\
 A_\lambda = \lambda \nabla _1\widetilde{\psi}_{2,1}+\nabla _1\widetilde{\psi}_{2,2}
 +(V-Q) \nabla _1\widetilde{\psi}_{1,1} + \frac{2}{S}\big(Q^2-SX\big) \nabla _1\widetilde{\psi}_{2,1}, \\
\nabla _1\widetilde{\psi}_{2,1} = \left(
\begin{matrix}
0 & 0 & 0 & 0 & 0 \\
0 & 0 & 0 & 0 & 0 \\
0 & 0 & 0 & 0 & 0 \\
-2 & 0 & 0 & 0 & 0 \\
0 & -2 & 0 & 0 & 0 \\
\end{matrix}
\right),\qquad \nabla _1\widetilde{\psi}_{1,1}
= \left(
\begin{matrix}
0 & 0 & 0 & 0 & 0 \\
-2 & 0 & 0 & 0 & 0 \\
0 & -2 & 0 & 0 & 0 \\
0 & 0 & -2 & 0 & 0 \\
0 & 0 & 0 & -2 & 0 \\
\end{matrix}
\right), \\
 \nabla _1\widetilde{\psi}_{2,2}=\left(
\begin{matrix}
0&-2S &0 &0 &0 \\
2(Q+V)& 0 &-4S &0 &0 \\
-2U&2Q &0 &-4S &0 \\
-2(T+2X)&0 &2Q &0 &-2S \\
0&-2(T+2X) &2U &2(Q+V) &0 \\
\end{matrix}
\right).
\end{gather*}

The next purpose is to restrict the vector f\/ield on a symplectic leaf. For the Poisson tensor~$\widetilde{P}_2$, a symplectic leaf is def\/ined by the level surface $\{ \widetilde{\psi}_{i,j} = \text{const},\, i,j=1,2\}$. In order for the condition
\begin{gather}
\frac{\partial \widetilde{X}_\lambda }{\partial \alpha } = \frac{\partial A_\lambda }{\partial \lambda } =\left(
\begin{matrix}
0 & 0 & 0 & 0 & 0 \\
0 & 0 & 0 & 0 & 0 \\
0 & 0 & 0 & 0 & 0 \\
-2 & 0 & 0 & 0 & 0 \\
0 & -2 & 0 & 0 & 0 \\
\end{matrix}\right)\label{5-1}
\end{gather}
to be satisf\/ied, we f\/ind that $Y$ in $\widetilde{X}_\lambda $ has to include a parameter $\alpha $ which will be replaced by~$t$ later. For this purpose, we take the symplectic leaf
\begin{gather*}
S = \big\{ \widetilde{\psi}_{2,1} = -2,\, \widetilde{\psi}_{1,1} = -2\alpha _2,\,\widetilde{\psi}_{2,2} = -2\alpha _4 + \alpha _2^2,\, \widetilde{\psi}_{1,2} = 4\alpha _6 - 2\alpha _2\alpha _4\big \}.
\end{gather*}
This is rewritten as
\begin{gather*}
\begin{split}
& S = 1,\qquad Q = \alpha _2-V,\qquad T = \alpha _4-2X-\frac{1}{2}V^2, \\
& Y = -2\alpha _6-\frac{1}{2}U^2 - \frac{1}{2}V^3 - 3VX + \frac{\alpha _2}{2}V^2 + 2\alpha _2X + \alpha _4V.
\end{split}
\end{gather*}
Substituting them into $\widetilde{X}_\lambda $ and $A_\lambda $, it turns out that the condition~(\ref{5-1}) is satisf\/ied with $\alpha =\alpha _6$. Finally, by replacing $\alpha _6$ by $t$, we obtain the isomonodromic deformation equation~(\ref{2-9}). The Poisson tensor $\widetilde{P}_2$ written with respect to the coordinates $(U,V,W,X)$ is already in the canonical symplectic matrix. Thus, the isomonodromic deformation equation is a Hamiltonian system with the Hamiltonian function $\widetilde{\psi}_{2,3}$ written in the coordinate system $(U,V,W,X)$. Since this expression is too complicated, we further introduce the symplectic transformation
\begin{gather*}
 (U,V,W,X) = \left(p_2,\, q_1+\frac{13}{18}\alpha _2,\, p_1 + \frac{4}{13}p_2\left(q_1+\frac{13}{18}\alpha _2\right)
,\right.\\
\left.\hphantom{(U,V,W,X) =}{} q_2+\frac{1}{3}\alpha _4 + \frac{7}{108}\alpha _2^2-\frac{2}{13}\left(q_1+\frac{13}{18}\alpha _2\right)^2\right).
\end{gather*}
Then, the Hamiltonian function (\ref{4dimCos2}), which is equivalent to (\ref{4dimCos}), is obtained.

\pdfbookmark[1]{References}{ref}
\LastPageEnding

\end{document}